\newtheorem{thm}{Theorem}[section]
\newtheorem{lem}[thm]{Lemma}
\newtheorem{prop}[thm]{Proposition}
\newtheorem{cor}[thm]{Corollary}
\newtheorem{obs}[thm]{Remark}
\newtheorem{example}[thm]{Example}
\newcommand{\forme}[1]{}
\newcommand{\gn}[1]{\langle {#1}\rangle}
\newcommand{\Qd}{/\!\!/}
\newcommand{\Or}{{\mathbf O}^{\mathrm \theta}}
\begin{document}
\begin{frontmatter}

\title{On $p$-schemes of order $p^3$}
\begin{abstract}
Let $(X,S)$ be a $p$-scheme of order $p^3$ and $T$ the thin residue of $S$.
Now we assume that $T$ has valency $p^2$.
It is easy to see that one of the following holds:
(i) $|T|=p^2$ and $T\simeq C_{p^2}$;
(ii) $|T|=p^2$ and $T\simeq C_p\times C_p$;
(iii) $|T|<p^2$.

It is known that $(X,S)$ is Schurian if (i) holds.
If (ii) holds, we will show that $(X,S)$ induces a partial linear space on $X/T$. Moreover, the character degrees of $(X,S)$ coincide with
the sizes of the lines of the partial linear space.
Under the assumption (iii) we will show a construction of non-Schurian $p$-schemes which are algebraically
isomorphic to a Schurian $p$-scheme of order $p^3$.
\end{abstract}

\author{Jung Rae Cho \fnref{fn1}}
\ead{jungcho@pusan.ac.kr}
\address{Department of Mathematics, College of Natural Sciences, Pusan National University,\\ Busan 609--735, Republic of Korea.}
\fntext[fn1]{This work was supported for two years by Pusan National University Research Grant.}

\author{Mitsugu Hirasaka \fnref{fn2}}
\ead{hirasaka@pusan.ac.kr}
\address{Department of Mathematics, College of Natural Sciences, Pusan National University,\\ Busan 609--735, Republic of Korea.}
\fntext[fn2]{This research was supported by Basic Research Program through the National Research Foundation of Korea(NRF) funded by the Ministry of Education, Science and Technology(grant number 2010-0009675).}

\author{Kijung Kim\corref{cor}\fnref{fn3}}
\ead{knukkj@pusan.ac.kr}
\address{Department of Mathematics, College of Natural Sciences, Pusan National University,\\ Busan 609--735, Republic of Korea.}
\fntext[fn3]{This work was supported by BK21 Dynamic Math Center, Department of Mathematics at Pusan National University.}
\cortext[cor]{Corresponding Author}


\begin{keyword}
association scheme \sep $p$-scheme \sep partial linear space.




\end{keyword}

\end{frontmatter}

\section{Introduction}\label{sec:intro}
An association scheme is a combinatorial object which is defined by some algebraic properties
derived from a transitive permutation group (see Section~2 for definitions).
Actually, any transitive permutation group $G$ on a finite set $\Omega$
induces an associations scheme $(\Omega,\mathcal{R}_G)$, where $\mathcal{R}_G$ is the set of
orbits of the induced action of $G$ on $\Omega\times \Omega$ (see \cite[Example 2.1]{bannai}).
On the other hand, we can find many association schemes which are not induced by groups
(see \cite{hanakimi}).

We say that an association scheme $(X,S)$ is \textit{Schurian} if
$S=\mathcal{R}_G$ for a transitive permutation group $G$ of $X$.
In order to determine whether a given association scheme $(X,S)$ is Schurian or not,
it is quite useful to consider its thin residue, i.e., the smallest subset $T$ of $S$
such that $\bigcup_{t\in T}t$ is an equivalence relation on $X$ and the factor scheme of $(X,S)$ over $T$
is induced by a regular permutation group (see Section~2 for definitions).
For example, if $T$ is a group under the relational product and the set of normal subgroups of $T$ is totally ordered with respect to the set-theoretic inclusion, then $(X,S)$ is Schurian (see \cite{hiszies}).
In this paper we consider $p$-schemes which generalize $(\Omega,\mathcal{R}_G)$ for $p$-groups $G$
and characterize them by their thin residues.

Let $(X,S)$ be a $p$-scheme, where $p$ is a prime.
Then $|X|$ is a power of $p$.
It is known that $(X,S)$ is Schurian if $|X|\in \{1,p,p^2\}$, and
there are exactly three $p$-schemes of order $p^2$.
Recall that the number of isomorphism classes of $p$-groups of order $p^3$
is five. In contrast to this situation the number of isomorphism classes of
$p$-schemes of order $p^3$ depends on the choice of $p$
(see \cite{hanakimi}), and there is a method to construct non-Schurian $p$-schemes of order $p^3$
for each $p>5$ (see \cite{banghi}).
By the discussion in the second paragraph, $(X,S)$ is Schurian if the thin residue $T$ of $S$
is a cyclic group under the relational product.
Thus, the following cases are left when we enumerate $p$-schemes of order $p^3$ with thin residue $T$ of valency $p^2$:
\begin{enumerate}
\item $T$ is the elementary abelian group of order $p^2$;
\item $T$ is not a group.
\end{enumerate}

In the first case, we will obtain a partial linear space $(\mathcal{P},\mathcal{L})$ (see Section~2 for details).
According to \cite{banghi} it is expected that quite many non-Schurain $p$-schemes of order $p^3$ can be
obtained. However, it seems far from reach to classify all $p$-schemes of order $p^3$.
We will consider $\{|L|\mid L\in \mathcal{L}\}$ and show that
it coincides with $\{\chi(I)\mid \chi \in \mathrm{Irr}(S)\}$,
where $I$ is the identity matrix and $\mathrm{Irr}(S)$ is the set of irreducible characters of the adjacency algebra of $(X,S)$ (see Section~2 for definitions).
This result is obtained as Theorem \ref{thm:main3} of this paper.

In the second case, it is known that there are three pairs of algebraically isomorphic $3$-schemes of order 27,
each consisting of one Schurian scheme and one non-Schurian scheme.
In this paper we will generalize this fact to obtain the following:

\begin{thm}\label{thm:main1}
Let $(X,S)$ be an association scheme, $T$ a closed subset of $S$, and $X_1, X_2, \dots, X_m$ the cosets of $T$ in $X$.
Suppose that $\iota_{_1}, \iota_{_2}, \dots, \iota_{_m} \in \mathrm{Aut}(T)$ extend to $\tilde{\iota}_{_1}, \tilde{\iota}_{_2}, \dots, \tilde{\iota}_{_m} \in \mathrm{Aut}(S)$ by $s^{\tilde{\iota}_{_j}} = s$ if $s \in S \setminus T$ and $s^{\tilde{\iota}_{_j}}= s^{\iota_{_j}}$ if $s \in T$ for all $j=1, \dots, m$.
Let $S' = \{s' \mid s \in S \}$, where $s'=s$ if $s \in S \setminus T$ and $s' = \bigcup_{j=1}^m (s^{\iota_{_j}} \cap (X_j \times X_j))$ if $s \in T$.
Then $(X,S')$ is an association scheme which is algebraically isomorphic to $(X,S)$.
\end{thm}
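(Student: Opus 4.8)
The plan is to prove the single identity
\[
|\{y\in X:(x,y)\in s_1',\ (y,z)\in s_2'\}|=p^{s_3}_{s_1s_2}
\]
for all $s_1,s_2,s_3\in S$ and all $(x,z)\in s_3'$, where $p^{s_3}_{s_1s_2}$ is the intersection number of $(X,S)$. Since the right-hand side does not depend on the chosen pair, this simultaneously shows that $(X,S')$ satisfies the regularity axiom (hence is an association scheme) and that the bijection $s\mapsto s'$ preserves intersection numbers, i.e.\ is an algebraic isomorphism. Before that I would settle the routine points: each coset $X_j$ has size $|T|$, the relations of $T$ are supported on $\bigcup_j(X_j\times X_j)$, and $T$ restricted to $X_j$ is an association scheme. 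Because each $\iota_j$ permutes $T$ and preserves valencies, the relations $s'$ $(s\in S)$ are nonempty, pairwise disjoint, satisfy $|s'|=|s|$, and on each block $X_k\times X_k$ one has $\{s'\cap(X_k\times X_k):s\in T\}=\{t\cap(X_k\times X_k):t\in T\}$; together with the fact that the relations $s\notin T$ partition the off-block part of $X\times X$, this shows $S'$ partitions $X\times X$. That $\Delta_X\in S'$ and that $S'$ is closed under transposition follow because algebraic automorphisms fix the identity relation and commute with transposition. Thus $s\mapsto s'$ is a bijection $S\to S'$, and only the displayed identity remains.

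The essential hypothesis is that each $\tilde\iota_j$ is an algebraic automorphism of the \emph{whole} scheme $(X,S)$. Writing out $p^{s_3^{\tilde\iota_j}}_{s_1^{\tilde\iota_j}s_2^{\tilde\iota_j}}=p^{s_3}_{s_1s_2}$ and using that $\tilde\iota_j$ fixes $S\setminus T$ pointwise while acting as $\iota_j$ on $T$, I extract the transformation rules I will need: $p^{t_3^{\iota_j}}_{t_1^{\iota_j}t_2^{\iota_j}}=p^{t_3}_{t_1t_2}$ for $t_1,t_2,t_3\in T$; and $p^{s}_{t^{\iota_j}u}=p^{s}_{tu}$, $p^{s}_{ut^{\iota_j}}=p^{s}_{ut}$, $p^{t^{\iota_j}}_{uv}=p^{t}_{uv}$ whenever $t\in T$ and $u,v,s\in S\setminus T$. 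I would also record the vanishing forced by the equivalence relation $\bigcup_{t\in T}t$: an intersection number $p^{s_3}_{s_1s_2}$ is zero as soon as the pattern of which of $s_1,s_2,s_3$ lie in $T$ is incompatible with that equivalence relation (for instance $s_1,s_2\in T$ but $s_3\notin T$, or $s_1\notin T$ but $s_2,s_3\in T$).

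Now fix $s_1,s_2,s_3\in S$ and $(x,z)\in s_3'$, say $x\in X_a$ and $z\in X_c$; note that $(x,z)\in s_3'$ means $(x,z)\in s_3$ if $a\neq c$ and $(x,z)\in s_3^{\iota_a}$ if $a=c$. I would partition the set of candidate $y$ according to the coset $X_b$ containing $y$ and analyse each block-position pattern of $(a,b,c)$. When $b\notin\{a,c\}$, membership $(x,y)\in s_1'$ is equivalent to $(x,y)\in s_1$ (and forces $s_1\notin T$), and similarly for $(y,z)$, so this contribution is exactly the $b\notin\{a,c\}$ part of $p^{s_3}_{s_1s_2}$. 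When $b=a\neq c$, the within-coset condition becomes $(x,y)\in s_1^{\iota_a}$ while $(y,z)\in s_2$ is unchanged, so this contribution is a part of $p^{s_3}_{s_1^{\iota_a}s_2}$ (evaluated at $(x,z)\in s_3$, since $s_3\notin T$), which by the rule $p^{s_3}_{s_1^{\iota_a}s_2}=p^{s_3}_{s_1s_2}$ equals the $y\in X_a$ part of $p^{s_3}_{s_1s_2}$; the case $b=c\neq a$ is symmetric, using $p^{s_3}_{s_1s_2^{\iota_c}}=p^{s_3}_{s_1s_2}$, and when $a=c$ and $b=a$ all three conditions are twisted by $\iota_a$, so $p^{s_3^{\iota_a}}_{s_1^{\iota_a}s_2^{\iota_a}}=p^{s_3}_{s_1s_2}$ applies. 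Summing the block contributions reconstructs $p^{s_3}_{s_1s_2}$, which proves the identity. The one real difficulty is organizational: keeping the case split on the positions of $s_1,s_2,s_3$ relative to $T$ in step with the split on the coset of $y$, so that each non-degenerate sub-count is recognised as (a part of) a twisted intersection number of $(X,S)$ and is then collapsed by exactly one of the transformation rules, while every degenerate case in which one side vanishes is matched by a vanishing intersection number on the other side.
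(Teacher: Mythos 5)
Your proposal is correct and follows essentially the same route as the paper's proof: both reduce the well-definedness of $c_{s_1's_2'}^{s_3'}$ to a case analysis governed by which of $s_1,s_2,s_3$ lie in $T$ (equivalently, by the coset positions of $x,y,z$), and then invoke the hypothesis $\tilde{\iota}_{_j}\in\mathrm{Aut}(S)$ to identify each twisted intersection number with the untwisted one. The only cosmetic difference is that the paper dispatches some sub-cases via the identity $n_w c_{uv}^{w}=n_v c_{w^\ast u^\ast}^{v^\ast}=n_u c_{vw^\ast}^{u^\ast}$ instead of treating every block position of $y$ directly.
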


\begin{thm}\label{thm:main2}
Let $H$ and $T$ be closed subsets of $S$ such that $H \subseteq T$ and $T$ is strongly normal in $S$.
Assume that the following conditions are satisfied:
\begin{enumerate}
\item $sH=sT$ for each $s \in S \setminus T$;
\item $tH=t$ for each $t \in T \setminus H$.
\end{enumerate}
Then, for each $\iota \in \mathrm{Aut}(T)$ fixing each element of $H$, we have $\tilde{\iota} \in \mathrm{Aut}(S)$, where $s^{\tilde{\iota}} = s$ if $s \in S \setminus T$ and $s^{\tilde{\iota}}= s^{\iota}$ if $s \in T$.
\end{thm}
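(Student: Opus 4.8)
The plan is to check directly that $\tilde\iota$ preserves all valencies and structure constants of $(X,S)$. Valency-preservation is immediate, since $\tilde\iota$ is a bijection of $S$ restricting to $\iota$ on $T$ and to the identity on $S\setminus T$, so the whole problem is to prove $a_{pq}^r=a_{p^{\tilde\iota}q^{\tilde\iota}}^{r^{\tilde\iota}}$ for all $p,q,r\in S$, where $a_{pq}^r$ denotes the intersection number. I would organise this using the quotient $G:=S/\!\!/T$, which is thin (hence a group) because $T$ is strongly normal, together with the natural epimorphism $\phi\colon S\to G$; note that $\phi\circ\tilde\iota=\phi$ and that $a_{pq}^r=0$ unless $\phi(r)=\phi(p)\phi(q)$, so whenever this relation fails for $(p,q,r)$ it also fails for $(p^{\tilde\iota},q^{\tilde\iota},r^{\tilde\iota})$ and both sides vanish. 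A short case analysis then cuts the problem down to one essential configuration. If none of $p,q,r$ lies in $T$, then $\tilde\iota$ fixes all three. If all lie in $T$, the required identity is exactly the statement that $\iota\in\mathrm{Aut}(T)$ (the intersection numbers involved are those of the scheme $T$). If exactly two lie in $T$, then $\phi(r)=\phi(p)\phi(q)$ fails. If exactly one lies in $T$, then either it lies in $H$ — in which case $\iota$ fixes it — or it lies in $T\setminus H$; and using the transpose identities $n_r a_{pq}^r=n_p a_{rq^*}^p=n_q a_{p^*r}^q$ (together with the fact that $*$ commutes with $\tilde\iota$) it suffices to handle the configuration $t\in T\setminus H$, $q,r\in S\setminus T$ with $\phi(r)=\phi(q)$, i.e.\ to show $a_{tq}^r=a_{t^\iota q}^r$.

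For this I would first extract the combinatorial meaning of (i) and (ii). Applying $*$ to (ii) (legitimate since closed subsets are $*$-invariant and $\iota$ maps $T\setminus H$ onto itself) yields $th=ht=\{t\}$ as complex products for every $t\in T\setminus H$ and $h\in H$; writing $\sigma_H$ for the adjacency matrix of the equivalence relation $\bigcup_{h\in H}h$ and $E_H=n_H^{-1}\sigma_H$ (idempotent, since $\sigma_H^2=n_H\sigma_H$ as $H$ is closed), this reads $E_H\sigma_t=\sigma_tE_H=\sigma_t$. Hence $\sigma_t\sigma_q=E_H(\sigma_t\sigma_q)$, which forces the entry $(\sigma_t\sigma_q)_{x,z}=a_{tq}^{r(x,z)}$ to be constant as $x$ ranges over its $H$-class; equivalently $a_{tq}^r=a_{tq}^{r'}$ whenever $r,r'$ are joined by an $H$-step on the left. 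Hypothesis (i) is what upgrades this to: any two relations $r,r'$ in the same $T$-coset are so joined. Indeed $\phi(r^*)=\phi((r')^{*})$, so $r^*\in (r')^{*}T$, and applying (i) to $(r')^{*}\in S\setminus T$ gives $(r')^{*}T=(r')^{*}H$, hence $r^*\subseteq (r')^{*}\circ h$ for some $h\in H$; unwinding this shows that for every vertex $z$ the $H$-class of any $r$-in-neighbour of $z$ contains an $r'$-in-neighbour of $z$. Consequently $a_{tq}^r$ is constant over the whole $T$-coset $\phi(q)$, and comparing with $\sum_{\phi(r)=\phi(q)}a_{tq}^r n_r=n_t n_q$ and $\sum_{\phi(r)=\phi(q)}n_r=n_T$ (the common size of a $T$-coset) yields $a_{tq}^r=n_t n_q/n_T$.

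To finish, since $\iota\in\mathrm{Aut}(T)$ preserves valencies and maps $T\setminus H$ onto itself, $t^\iota\in T\setminus H$ and $a_{t^\iota q}^r=n_{t^\iota}n_q/n_T=n_t n_q/n_T=a_{tq}^r$, which settles the essential configuration; the two transposed configurations follow from it via the transpose identities, and all remaining cases were handled above. Hence $\tilde\iota$ preserves every intersection number, so $\tilde\iota\in\mathrm{Aut}(S)$. I expect the main obstacle to be the second paragraph, specifically the interlocking of the two hypotheses: (ii) gives only invariance of $\sigma_t\sigma_q$ under the \emph{left} $H$-action, and one must verify, using (i) with the correct placement of $*$ and of the $H$-step, that this action is transitive on each $T$-coset of relations, so that ``invariant'' becomes ``constant''; checking this, together with the subsidiary fact that the relations between two $T$-cosets form a single coset of $G$, is the delicate part.
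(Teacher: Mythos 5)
Your proposal is correct and follows essentially the same route as the paper: the identical four-way case split on how many of $u,v,w$ lie in $T$, the same use of the transpose identities to move the $T$-element into the first position, and the same key observation that conditions (i) and (ii) force $c_{tq}^{r}$ (for $t\in T\setminus H$ and $q,r\in S\setminus T$) to depend on $t$ only through $n_t$. The only difference is cosmetic: where the paper counts $|xu\cap yv^{\ast}|=\frac{n_u}{n_H}c_{v^{\ast}H}^{v^{\ast}}$ directly via $H$-cosets, you derive constancy of $c_{tq}^{\cdot}$ on the coset $Hq=Tq$ from the idempotent $E_H$ and then normalize to get $n_tn_q/n_{T}$ (the two formulas agree, and your implicit use of $Tq=TqT$ is justified since the quotient by the strongly normal $T$ is thin).
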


As a corollary of these theorems we can construct quite a many $p$-schemes of order $p^3$
by taking arbitrary sets of automorphisms of the cyclic group of order $p$.

This paper is organized as follows.
In Section \ref{sec:pre}, we review notations and terminology on association schemes and partial linear spaces.
In Section \ref{sec:main1}, we show that the irreducible character degrees coincide with
the sizes of the lines of the partial linear space.
In Section \ref{sec:main2}, we give proofs of Theorem \ref{thm:main1} and \ref{thm:main2}, and
apply these results to a class of $p$-schemes obtaining association schemes with the same intersection numbers.

\section{Preliminaries}\label{sec:pre}
In this section, we review some notations and known facts about association schemes and partial linear spaces.
Throughout this paper, we use the notations given in \cite{hanaki1,zies}.

\subsection{Association schemes}\label{sec:as}
Let $X$ be a non-empty finite set.
Let $S$ denote a partition of $X \times X$. Then we say that $(X, S)$ is
an \textit{association scheme} (or shortly  \textit{scheme}) if it satisfies the following conditions:

\begin{enumerate}
\item $1_{_X} := \{ (x, x) \mid  x \in X \} \in S$;
\item For each $s \in S$, $s^* := \{(x, y) \mid (y, x) \in s \} \in S$;
\item For all $ s, t, u \in S$ and $x, y \in X$, $c_{st}^u :=| xs \cap yt^* |$ is constant whenever $(x, y) \in u$,
where $xs:=\{ y \in X \mid (x,y) \in s \}$.
\end{enumerate}

For each $ s \in S $, we abbreviate $ c_{ss^*}^{1_{_X}} $ as $n_s$, which is called the \textit{valency} of $s$.
For a subset $U$ of $S$, put $n_{_U} = \sum_{u \in U} n_u$.
We call $n_{_S}$ the \textit{order} of $(X, S)$.
In particular, $(X,S)$ is called a \textit{p-scheme} if $|X| \prod_{s \in S} n_s$ is a power of $p$, where $p$ is a prime.

Let $P$ and $Q$ be non-empty subsets of $S$. We define $PQ$ to be the set of all elements $s \in S$ such that there exist element
$p \in P$ and $q \in Q$ with $c_{pq}^s \neq 0$. The set $PQ$ is called the \textit{complex product} of $P$ and $Q$.
If one of factors in a complex product consists of a single element $s$, then one usually writes  $s$ for $\{ s \}$.

A non-empty subset $T$ of $S$ is called \textit{closed} if $TT \subseteq T$.
Note that a subset $T$ of $S$ is closed if and only if $\bigcup_{t \in T}t$ is an equivalence relation on $X$.
A closed subset $T$ is called \textit{thin} if all elements of $T$ have valency 1.
The set $\{ s \mid n_s=1 \}$ is called the \textit{thin radical} of $S$ and denoted by $\mathbf{O}_\mathrm{\theta} (S)$.
Note that $T$ is thin if and only if $T$ is a group with respect to the relational product.

For a closed subset $T$ of $S$, $n_{_S} / n_{_T}$ is called the \textit{index} of $T$ in $S$.
A closed subset $T$ of $S$ is called \textit{strongly normal} in $S$, denoted by $T \lhd^\sharp S$, if $s^* T s \subseteq T$ for every $s \in S$.
We put $\Or (S) := \bigcap_{T \lhd^\sharp S} T $ and call it the \textit{thin residue} of $S$.
Note that $\Or (S)$ is the intersection of all closed subsets of $S$ which contain $\bigcup_{s \in S} s^*s$ (see \cite[Theorem 2.3.1]{zies}).

For each closed subset $T$ of $S$, we define $X/T:= \{ xT \mid x \in X\}$, called the set of \textit{cosets} of $T$ in $X$, and $S/\!/T:=\{ s^T \mid s \in S \}$,
where $xT:= \bigcup_{t \in T} xt$ and $s^T:= \{ (xT,yT) \mid y \in xTsT \}$. Then $(X/T, S/\!/T)$(or shortly $S/\!/T$) is a scheme
called the \textit{quotient}(or \textit{factor}) scheme of $(X,S)$ over $T$.
Note that $T \lhd^\sharp S$ if and only if $S/\!/T$ is a group (see \cite[Theorem 2.2.3]{zies}).

Let $(W,F)$ and $(Y,H)$ be association schemes.
For each $f \in F$ we define
\[ \overline{f}:= \{ ((w_1,y),(w_2,y)) \mid y \in Y, (w_1,w_2) \in f  \} .\]
For each $h \in H \setminus \{1_{_Y}\}$ we define
\[ \overline{h}:=\{ ((w_1,y_1),(w_2,y_2)) \mid w_1, w_2 \in W, (y_1,y_2) \in h \}. \]
Let $F \wr H := \{ \overline{f} \mid f \in F \} \cup \{\overline{h} \mid h \in H \setminus \{1_{_Y}\} \}$.
Then $(W \times Y, F \wr H)$ is an association scheme called the \textit{wreath product} of $(W,F)$ and $(Y,H)$.

Let $(X,S)$ and  $(X_1, S_1)$ be association schemes. A bijective map $\phi : X \cup S \rightarrow X_1 \cup S_1$ is called an \textit{isomorphism} if it satisfies the following conditions:
\begin{enumerate}
\item $X^\phi \subseteq X_1$ and $S^\phi \subseteq S_1$;
\item For all $x, y \in X$ and $s \in S$ with $(x,y) \in s$, $(x^\phi, y^\phi) \in s^\phi$.
\end{enumerate}
An isomorphism $\phi : X \cup S \rightarrow X \cup S$ is called an \textit{automorphism} of $(X,S)$ if $s^{\phi}=s$ for all $s \in S$.
We denote by  the $\mathrm{Aut}(X,S)$ the automorphism group of $(X,S)$.

On the other hand, we say that $(X,S)$ and  $(X_1, S_1)$ \textit{are algebraically isomorphic} or \textit{have the same intersection numbers} if there exists a bijection $\iota : S \rightarrow S_1$ such that $c_{rs}^{t} = c_{r^\iota s^\iota} ^{t^\iota}$ for all $r, s, t \in S$.
For a closed subset $T$ of $S$, we denote by $\mathrm{Aut}(T)$ the set of permutations of $T$ which preserve the intersection numbers, namely
\[ \mathrm{Aut}(T) = \{ \iota \in \mathrm{Sym}(T) \mid c_{rs}^{t} = c_{r^\iota s^\iota}^{t^\iota} ~\text{for all}~ r, s, t \in T \}.\]

For each $s \in S$, we denote the \textit{adjacency matrix} of $s$ by $\sigma_s$.
Namely $\sigma_s$ is a matrix whose rows and columns are indexed by the elements of $X$ and $(\sigma_s)_{xy} = 1$ if $(x,y) \in s$ and $(\sigma_s)_{xy} = 0$ otherwise.
We denote by $\mathbb{C}S$ the subspace of $\mathrm{Mat}_{_{|X|}}(\mathbb{C})$ spanned by $\{ \sigma_s \mid s \in S \}$.
 It follows from the definition of a scheme that $\mathbb{C}S = \bigoplus_{s \in S} \mathbb{C}\sigma_s$ is a subalgebra of $\mathrm{Mat}_{_{|X|}}(\mathbb{C})$, and it is called the \textit{adjacency algebra} of $(X,S)$.
For a subset $U$ of $S$,
we put $\mathbb{C}U = \bigoplus_{u \in U} \mathbb{C}\sigma_u$ as a subset of $\mathbb{C}S$.
It is well-known that $\mathbb{C}S$ is a semisimple algebra (see \cite{zies} Theorem 4.1.3).
The set of irreducible characters of $S$ is denoted by $\mathrm{Irr}(S)$. The map $\sigma_s \mapsto n_s$ is called the \textit{principal} character of $S$
and denoted by $1_{_S}$.
Now $\mathbb{C}X=\bigoplus_{x \in X}\mathbb{C}x$ is a $\mathbb{C}S$-module by $x\sigma_s = \sum_{y \in xs}y$ (see \cite{zies} p.97).
Let $\tau_{_S}$ be the character afforded by $\mathbb{C}X$. We call $\tau_{_S}$ the \textit{standard} character of $S$.
For each irreducible character of $S$, there exists a non-negative integer $m_\chi$ such that $\tau_{_S} = \sum_{\chi \in \mathrm{Irr}(S)} m_\chi \chi$. The number $m_\chi$ is called the \textit{multiplicity} of $\chi$.
For $\chi \in \mathrm{Irr}(S)$, the central primitive idempotent of $\mathbb{C}S$ corresponding to $\chi$ is denoted by $e_\chi$.
Note that $\mathrm{Irr}(S/\!/\Or (S))$ is embedded in $\mathrm{Irr}(S)$ (see \cite{factor}).

\begin{thm}[see \cite{muzy}]\label{thm:residue}
Let $\chi \in \mathrm{Irr}(S)$. Then $\chi \in \mathrm{Irr}(S/\!/\Or (S))$ if and only if $m_\chi = \chi(\sigma_{1_{_X}})$.
\end{thm}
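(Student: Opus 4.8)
The plan is to show that $m_\chi=\chi(\sigma_{1_{_X}})$ is equivalent to the condition $(\star)$ that $\rho_\chi(\sigma_t)=n_t I$ for every $t\in\Or(S)$, where $\rho_\chi$ is an irreducible representation affording $\chi$, realized on a $\mathbb{C}S$-submodule $V_\chi$ of the standard module $\mathbb{C}X$ — so that $\rho_\chi$ is a $\ast$-representation (since $\sigma_{s^*}$ is the transpose of $\sigma_s$) with $\|\rho_\chi(\sigma_s)\|_{\mathrm{op}}\le n_s$ — and that $(\star)$ is in turn equivalent to $\chi\in\mathrm{Irr}(S/\!\!/\Or(S))$. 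The latter equivalence is the routine half: putting $e:=n_{\Or(S)}^{-1}\sum_{t\in\Or(S)}\sigma_t$, condition $(\star)$ says exactly that $\rho_\chi(e)=I$, i.e.\ $V_\chi e=V_\chi\neq 0$; since $e\,\mathbb{C}S\,e\cong\mathbb{C}(S/\!\!/\Or(S))$, the truncation correspondence for idempotents in the semisimple algebra $\mathbb{C}S$ shows $V_\chi e$ is a simple $\mathbb{C}(S/\!\!/\Or(S))$-module, i.e.\ $\chi\in\mathrm{Irr}(S/\!\!/\Or(S))$. Conversely, if $\chi\in\mathrm{Irr}(S/\!\!/\Or(S))$ then $\rho_\chi$ factors through the algebra surjection $\mathbb{C}S\twoheadrightarrow\mathbb{C}(S/\!\!/\Or(S))$ underlying the embedding cited before the statement, and this surjection carries each $\sigma_t$ with $t\in\Or(S)$ to $n_t$ times the identity element, giving $(\star)$.

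The substance is the equivalence $m_\chi=\chi(\sigma_{1_{_X}})\iff(\star)$, driven by the identity
\[
\sum_{s\in S}\frac{\chi(\sigma_{s^*}\sigma_s)}{n_s}=\frac{n_{_S}\,\chi(\sigma_{1_{_X}})^{2}}{m_\chi}.
\]
To prove it I would start from the orthogonality relation $\sum_{u\in S}\chi(\sigma_{u^*})\chi'(\sigma_u)/n_u=\delta_{\chi\chi'}\,n_{_S}\chi(\sigma_{1_{_X}})/m_\chi$, which comes from equating coefficients of $\sigma_{1_{_X}}$ in $e_\chi e_{\chi'}=\delta_{\chi\chi'}e_\chi$ together with the standard formula for the central primitive idempotent $e_\chi$. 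Then: expand $\chi(\sigma_{s^*}\sigma_s)=\sum_u c_{s^*s}^{u}\chi(\sigma_u)$, swap the order of summation, use the valency identity $n_u c_{s^*s}^{u}=n_s c^{s}_{s,u^*}$ to turn $\sum_s c_{s^*s}^{u}/n_s$ into $\tfrac1{n_u}\sum_s c^{s}_{s,u^*}$, and recognize $\sum_s c^{s}_{s,u^*}$ as the trace of right multiplication by $\sigma_{u^*}$ on $\mathbb{C}S\cong\bigoplus_{\chi'}\mathrm{Mat}_{\chi'(\sigma_{1_{_X}})}(\mathbb{C})$, hence equal to $\sum_{\chi'}\chi'(\sigma_{1_{_X}})\chi'(\sigma_{u^*})$; feeding this back and applying the orthogonality relation collapses the double sum to the right-hand side above. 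Since $\chi(\sigma_{s^*}\sigma_s)=\|\rho_\chi(\sigma_s)\|_{\mathrm{HS}}^{2}\le\chi(\sigma_{1_{_X}})\,n_s^{2}$, with equality precisely when $\rho_\chi(\sigma_{s^*}\sigma_s)=n_s^{2}I$, and $\sum_s n_s=n_{_S}$, the identity forces $m_\chi\ge\chi(\sigma_{1_{_X}})$, with $m_\chi=\chi(\sigma_{1_{_X}})$ exactly when $\rho_\chi(\sigma_{s^*}\sigma_s)=n_s^{2}I$ for every $s\in S$.

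It remains to match ``$\rho_\chi(\sigma_{s^*}\sigma_s)=n_s^{2}I$ for all $s$'' with $(\star)$. Under $(\star)$ this is immediate, since $\sigma_{s^*}\sigma_s=\sum_{u\in\Or(S)}c_{s^*s}^{u}\sigma_u$ maps to $\big(\sum_u c_{s^*s}^{u}n_u\big)I=n_s^{2}I$. The reverse implication is the step I expect to take the most care. From $\sum_u c_{s^*s}^{u}n_u=n_s^{2}$ one writes $I=\sum_u\big(c_{s^*s}^{u}n_u/n_s^{2}\big)\,\big(n_u^{-1}\rho_\chi(\sigma_u)\big)$, a convex combination of operators of operator-norm at most $1$; because $I$ is an extreme point of the operator-norm unit ball of $\mathrm{Mat}_{\chi(\sigma_{1_{_X}})}(\mathbb{C})$, every term with $c_{s^*s}^{u}>0$ must equal $I$, so $\rho_\chi(\sigma_u)=n_u I$ for all $u\in s^*s$. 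Varying $s$ yields $\rho_\chi(\sigma_u)=n_u I$ for every $u\in\bigcup_{s\in S}s^*s$; moreover $\{t\in S:\rho_\chi(\sigma_t)=n_t I\}$ is closed under $\ast$ and, by the same convex-combination argument applied to the expansion of $\sigma_{u_1}\sigma_{u_2}$, under the complex product, so it is a closed subset of $S$ containing $\bigcup_s s^*s$; by the description of $\Or(S)$ as the smallest such closed subset (Section~\ref{sec:pre}), it contains $\Or(S)$, which is $(\star)$. The genuinely delicate points are this extreme-point deduction and the two computations behind the displayed identity (the coefficient extraction from $e_\chi e_{\chi'}$ and the trace of right multiplication on $\mathbb{C}S$); the remainder is bookkeeping with the semisimple structure of the adjacency algebra.
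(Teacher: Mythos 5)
The paper gives no proof of this statement --- it is imported verbatim from \cite{muzy} --- so there is no internal argument to compare against; I have therefore checked your proof on its own terms, and it is essentially correct. The displayed identity $\sum_{s}\chi(\sigma_{s^*}\sigma_s)/n_s=n_{_S}\chi(\sigma_{1_{_X}})^2/m_\chi$ does follow exactly as you describe (coefficient of $\sigma_{1_{_X}}$ in $e_\chi e_{\chi'}$ plus the trace of right multiplication on the regular module), the bound $\chi(\sigma_{s^*}\sigma_s)=\|\rho_\chi(\sigma_s)\|_{\mathrm{HS}}^2\le\chi(\sigma_{1_{_X}})n_s^2$ is valid because $\rho_\chi$ can be realized orthogonally inside $\mathbb{C}X$ and $\|\sigma_s\|_{\mathrm{op}}\le\sqrt{\|\sigma_s\|_1\|\sigma_s\|_\infty}=n_s$, and the extreme-point argument (unitaries are the extreme points of the spectral-norm unit ball) correctly upgrades the trace equality to $\rho_\chi(\sigma_u)=n_uI$ on all of $\bigcup_s s^*s$ and then, via closure under $*$ and the complex product, to all of $\Or(S)$ by the characterization of the thin residue quoted in Section~\ref{sec:pre}. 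Note that en route you prove $m_\chi\ge\chi(\sigma_{1_{_X}})$ for every irreducible character of every association scheme; this Higman-type inequality is exactly the engine behind the proof in \cite{muzy} as well, so your route is in the same spirit as the source, just reassembled from scratch.

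The one place that needs tightening is the sentence ``$\rho_\chi$ factors through the algebra surjection $\mathbb{C}S\twoheadrightarrow\mathbb{C}(S/\!\!/\Or(S))$ underlying the embedding.'' The embedding of $\mathrm{Irr}(S/\!\!/T)$ into $\mathrm{Irr}(S)$ in \cite{factor} is defined for an arbitrary closed subset $T$ via the condensation $e\mathbb{C}Se\cong\mathbb{C}(S/\!\!/T)$ with $e=n_{_T}^{-1}\sum_{t\in T}\sigma_t$, and for a general closed subset no algebra epimorphism $\mathbb{C}S\to\mathbb{C}(S/\!\!/T)$ exists. You should say explicitly that for $T=\Or(S)$, which is strongly normal, the map $\sigma_s\mapsto n_s\sigma_{s^T}$ \emph{is} an algebra epimorphism onto the group algebra $\mathbb{C}(S/\!\!/T)$ (multiplicativity follows from $\sum_w c_{su}^w n_w=n_sn_u$ together with the fact that $su$ lies in a single coset $s^Tu^T$), and then check that the pullbacks along this map are precisely the characters $\chi$ with $V_\chi e\ne 0$; a counting argument does this, since the pullbacks are pairwise distinct, all satisfy $\rho_\chi(e)=I$, and their number equals $|\mathrm{Irr}(S/\!\!/T)|$, which by the condensation bijection is also the number of $\chi$ with $V_\chi e\ne 0$. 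This closes the only real gap --- namely, ruling out a $\chi$ with $\rho_\chi(e)$ a nonzero proper idempotent, which would lie in the image of the embedding of \cite{factor} without satisfying your condition $(\star)$. With that addition the proof is complete.
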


Let $\mathfrak{X}$ be a representation of $\mathbb{C}S$ affording the character $\chi$.
We let $\mathfrak{X}(\sigma_s)=(x_{ij}(\sigma_s))$. Let $\rho$ be a field automorphism of $\mathbb{C}$.
Then $\mathfrak{X}^\rho : \sigma_s \mapsto (x_{ij}(\sigma_s)^\rho)$ is also a representation of $\mathbb{C}S$,
and $\mathfrak{X}^\rho$ is irreducible if and only if so is $\mathfrak{X}$. The character $\chi^\rho$ afforded by $\mathfrak{X}^\rho$ satisfies
$\chi^\rho(\sigma_s) = \chi(\sigma_s)^\rho$. We say that $\chi^\rho$ is the \textit{algebraic conjugate} of $\chi$ by $\rho$
, or $\chi$ and $\chi^\rho$ are \textit{algebraically conjugate}.

\subsection{Strongly normal closed subsets of prime index}\label{sec:primeindex}
Let $(X, S)$ be an association scheme and $T$ a strongly normal closed subset of $S$.
For $x \in X$, the adjacency algebra of the subscheme $(X,S)_{xT}$ is isomorphic to $\mathbb{C}T$ (see \cite{zies} for the definition).
So we can regard $\mathbb{C}T$ as the adjacency algebra. Put
\[G:=S/\!/T. \]
Then the adjacency algebra $\mathbb{C}S$ is a $G$-graded algebra.
Based on the result given in \cite{hanaki1}, we build up this subsection.

Let $\varphi$ be an irreducible character of $T$ and $L$ an irreducible right $\mathbb{C}T$-module affording $\varphi$.
The \textit{induction} of $L$ to $S$ is defined by
\[ L^S = L \otimes_{\mathbb{C}T} \mathbb{C}S = \bigoplus_{s^T \in S/\!/T} L \otimes \mathbb{C}(TsT).\]
Define the \textit{support} $\mathrm{Supp}(L^S)$ of $L^S$ by
\[\mathrm{Supp}(L^S)=\{ s^T  \in S/\!/T \mid L \otimes \mathbb{C}(TsT)\neq0 \},\]
and the \textit{stabilizer} $G\{L\}$ of $L$ in $G$ by
\[ G\{L\}=\{s^T \in S/\!/T \mid L \otimes \mathbb{C}(TsT)\cong L ~(\text{as}~ \text{$\mathbb{C}T$-modules}) \} .\]
Then $G\{L\}$ is a subgroup of $G$ and $\mathrm{Supp}(L^S)$ is a union of left cosets of $G\{L\}$ in $G$.

Under the assumption that $T$ is a strongly normal closed subset of $S$, we have the following result.
\begin{thm}[see \cite{hanaki}]\label{thm:irreducible2}
For any irreducible $\mathbb{C}T$-module $L$ and $s \in S$, $L\otimes_{\mathbb{C}T} \mathbb{C}(TsT)$ is an irreducible $\mathbb{C}T$-module or $0$.
\end{thm}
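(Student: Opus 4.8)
The plan is to work inside the $G$-graded structure of $\mathbb{C}S$ introduced above, with $G:=S/\!\!/T$. Since $T\lhd^\sharp S$, the quotient $G$ is a group, so $\mathbb{C}S=\bigoplus_{g\in G}A_g$ is a genuine $G$-graded algebra, where $A_g:=\mathbb{C}(TsT)$ for $s^T=g$, where $A_1=\mathbb{C}T$, and where $A_gA_h\subseteq A_{gh}$. Fixing $g=s^T$, I would reduce the statement to a property of the $(\mathbb{C}T,\mathbb{C}T)$-bimodule $A_g$: writing the simple right $\mathbb{C}T$-module $L$ as $e\mathbb{C}T$ for a primitive idempotent $e\in\mathbb{C}T$, one has $L\otimes_{\mathbb{C}T}\mathbb{C}(TsT)\cong eA_g$ as right $\mathbb{C}T$-modules, so it is enough to show that $-\otimes_{\mathbb{C}T}A_g$ sends each simple module to a simple module or to $0$.

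The technical core is a structural claim about $A_g$. First, $A_gA_{g^{-1}}$ is a two-sided ideal of the semisimple algebra $\mathbb{C}T=A_1$ (it is stable under left and right multiplication by $A_1$), hence equals $\mathbb{C}Tf_g$ for a central idempotent $f_g$; it is nonzero because the $\sigma_{1_{_X}}$-coefficient of $\sigma_s\sigma_{s^*}$ is $n_s>0$. I claim moreover that $f_g$ acts as the identity on $A_g$ from the left, i.e. the left annihilator of $A_g$ in $\mathbb{C}T$ is exactly $\mathbb{C}T(1-f_g)$. This is where the combinatorics of $(X,S)$ enters: from the elementary identity $c_{uv^*}^{1_{_X}}=\delta_{uv}n_u$ one sees that the pairing $A_g\times A_{g^{-1}}\to A_1$ is left-nondegenerate — if $x=\sum_{u^T=g}a_u\sigma_u\in A_g$ satisfies $x\sigma_{v^*}=0$ for every $v$ with $v^T=g$ (the $\sigma_{v^*}$ spanning $A_{g^{-1}}$), then the $\sigma_{1_{_X}}$-coefficient of $x\sigma_{v^*}$ equals $a_vn_v$, forcing $x=0$. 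Hence for any block idempotent $e_i$ of $\mathbb{C}T$ with $e_iA_g\neq 0$ one gets $e_i(A_gA_{g^{-1}})=(e_iA_g)A_{g^{-1}}\neq 0$, so $e_i\le f_g$; combined with the trivial inclusion $A_gA_{g^{-1}}=\hat e A_gA_{g^{-1}}\subseteq\hat e\,\mathbb{C}T$ (so $f_g\le\hat e$), where $\hat e$ is the minimal central idempotent with $\hat eA_g=A_g$, this forces $f_g=\hat e$ and in particular $f_gA_g=A_g$. Applying the same argument to $g^{-1}$ yields a central idempotent $f_g':=f_{g^{-1}}$ with $A_{g^{-1}}A_g=\mathbb{C}Tf_g'$ and $f_g'A_{g^{-1}}=A_{g^{-1}}$, and the mirror (right-handed) version gives $A_gf_g'=A_g$ and $A_{g^{-1}}f_g=A_{g^{-1}}$.

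It then follows by formal module theory that $(f_g\mathbb{C}T,\ f_g'\mathbb{C}T,\ A_g,\ A_{g^{-1}})$, together with the multiplication maps $A_g\otimes_{f_g'\mathbb{C}T}A_{g^{-1}}\to f_g\mathbb{C}T$ and $A_{g^{-1}}\otimes_{f_g\mathbb{C}T}A_g\to f_g'\mathbb{C}T$, is a Morita context: by the previous paragraph $A_g$ (resp. $A_{g^{-1}}$) is a bimodule over these two block algebras, faithful on both sides, associativity is inherited from multiplication in $\mathbb{C}S$, and the two pairings are surjective since their images are $A_gA_{g^{-1}}=\mathbb{C}Tf_g$ and $A_{g^{-1}}A_g=\mathbb{C}Tf_g'$. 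By the standard fact that a Morita context with surjective pairings is a Morita equivalence, $-\otimes_{f_g\mathbb{C}T}A_g\colon\mathrm{Mod}\text{-}f_g\mathbb{C}T\to\mathrm{Mod}\text{-}f_g'\mathbb{C}T$ is an equivalence of categories. Since $f_gA_g=A_g$ we have $L\otimes_{\mathbb{C}T}A_g\cong (Lf_g)\otimes_{f_g\mathbb{C}T}A_g$; if $Lf_g=0$ this is $0$, and otherwise $Lf_g=L$ is a simple right $f_g\mathbb{C}T$-module, whose image under the equivalence — namely $L\otimes_{\mathbb{C}T}A_g$ — is a simple right $f_g'\mathbb{C}T$-module and hence a simple right $\mathbb{C}T$-module. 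This is exactly the assertion.

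I expect the main obstacle to be the structural claim of the second paragraph — precisely, identifying the central idempotent cutting out $A_gA_{g^{-1}}$ with the left-support idempotent of $A_g$. This is the step that genuinely uses strong normality of $T$ (it is what makes $S/\!\!/T$ a group, so that the grading is honest and $g^{-1}$ is available) together with the normalization of the structure constants through $c_{uv^*}^{1_{_X}}=\delta_{uv}n_u$; everything afterwards is formal ring theory. It is worth noting that the same analysis simultaneously recovers the descriptions of $\mathrm{Supp}(L^S)$ and of the stabilizer $G\{L\}$ recorded above, since $L\otimes_{\mathbb{C}T}A_g\neq 0$ precisely when $Lf_g\neq 0$.
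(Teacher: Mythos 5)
The paper does not prove this statement; it is quoted from Hanaki's \emph{Clifford theory for association schemes} \cite{hanaki}, so there is no internal proof to compare against. Your argument is correct and is essentially the argument of that reference: strong normality makes $\mathbb{C}S$ honestly $G$-graded, the identity $c_{uv^*}^{1_{_X}}=\delta_{uv}n_u$ gives nondegeneracy of the pairings $A_g\times A_{g^{-1}}\to\mathbb{C}T$, this identifies $A_gA_{g^{-1}}$ and $A_{g^{-1}}A_g$ with the block ideals supporting $A_g$ on the left and right, and the resulting Morita context with surjective pairings shows $-\otimes_{\mathbb{C}T}A_g$ carries simples to simples or to zero. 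All the individual steps (the ideal property of $A_gA_{g^{-1}}$, $f_g=\hat e$, the reduction $L\otimes_{\mathbb{C}T}A_g\cong Lf_g\otimes_{f_g\mathbb{C}T}A_g$) check out.
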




For a character $\varphi$ of $T$ afforded by an irreducible $\mathbb{C}T$-module $L$, we write $\varphi^S$ for the induced character of $\varphi$ to $S$, and for a character $\chi$ of $S$ we write $\chi_{_T}$ for the restriction of $\chi$ to $T$. We put $\mathrm{Supp}(\varphi^S):= \mathrm{Supp}(L^S)$ and $G\{ \varphi \}:= G\{L\}$.

\begin{thm}[see \cite{hanaki1}]\label{thm:key}
Let $(X, S)$ be an association scheme and $T$ a strongly normal closed subset of $S$.
Suppose $G = S/\!/T$ is the cyclic group of prime order $p$.
Then for $\chi \in \mathrm{Irr}(S)$, one of the following statements holds:
\begin{enumerate}
\item[(1)] $\chi_{_T} \in \mathrm{Irr}(T)$ and $(\chi_{_T})^S = \sum_{i=1} ^p \chi \xi_i$, where $\mathrm{Irr}(G)=\{ \xi_i \mid 1 \leq i \leq p\}$;
\item[(2)] $\chi(\sigma_s) = 0$ for any $s \in S \setminus T$ and $\chi_{_T}$ is a sum of at most $p$ distinct irreducible characters.
If $\psi$ is an irreducible constituent of $\chi_{_T}$, then $\psi^S=\chi$.
\end{enumerate}
\end{thm}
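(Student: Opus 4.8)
The plan is to derive this from Clifford theory for the $G$-graded adjacency algebra $\mathbb{C}S=\bigoplus_{g\in G}(\mathbb{C}S)_g$, where $G=S/\!/T$, $(\mathbb{C}S)_{s^{T}}=\mathbb{C}(TsT)$, and the identity component is $\mathbb{C}T$; since $T$ is strongly normal, $G$ is a group and this grading is \emph{strong} (each $\mathbb{C}(TsT)$ is an invertible $(\mathbb{C}T,\mathbb{C}T)$-bimodule), which is the structural fact that makes induction from $\mathbb{C}T$ well behaved. Fix $\chi\in\mathrm{Irr}(S)$, an irreducible $\mathbb{C}S$-module $V$ affording $\chi$, and --- using semisimplicity of $\mathbb{C}T$ --- an irreducible $\mathbb{C}T$-submodule $L\le V_{T}$ affording some $\varphi\in\mathrm{Irr}(T)$. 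Put $L_g:=L\cdot(\mathbb{C}S)_g$. By Theorem~\ref{thm:irreducible2} each $L_g$ is an irreducible $\mathbb{C}T$-module, $V=\sum_{g\in G}L_g$ since $L$ generates $V$ over $\mathbb{C}S$, $(L^{S})_{T}\cong\bigoplus_{g\in G}L_g$ where $L^{S}:=L\otimes_{\mathbb{C}T}\mathbb{C}S$, and $V$ is a homomorphic image of $L^{S}$, so $\dim V\le\dim L^{S}=p\dim L$. The argument now splits according to the inertia subgroup $G\{L\}\le G$, which --- as $p$ is prime --- is either trivial or all of $G$; these two cases will produce statements (2) and (1) respectively.

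Assume $G\{L\}=1$, so the modules $L_g$, $g\in G$, are pairwise non-isomorphic, hence $p$ distinct irreducibles all occurring in $V_{T}$; thus $\dim V\ge p\dim L$, and combined with the reverse bound above we get $\dim V=p\dim L$, $V_{T}=\bigoplus_{g\in G}L_g$, and $V\cong L^{S}$. Consequently $\chi_{T}$ is a sum of $p$ distinct irreducible characters. If $\psi$ is any constituent of $\chi_{T}$, afforded by $L_g$ say, then $L_g^{S}\cong L^{S}\cong V$ (conjugate $\mathbb{C}T$-modules induce to isomorphic $\mathbb{C}S$-modules), so $\psi^{S}=\chi$. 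Finally, for $s\in S\setminus T$ we have $s^{T}\ne1$, and right multiplication by $\sigma_s$ carries the $L_h$-isotypic summand $V_h$ of $V_{T}$ into $V_{h\,s^{T}}$, a \emph{different} summand; hence, in a basis of $V$ adapted to the decomposition $\bigoplus_{g}V_g$, the matrix of $\sigma_s$ has zero diagonal, so $\chi(\sigma_s)=0$. This is statement (2).

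Now assume $G\{L\}=G$, so every $L_g\cong L$ and $(L^{S})_{T}\cong p\varphi$. Write $L^{S}=\bigoplus_i m_i\chi_i$ with the $\chi_i$ pairwise distinct irreducible characters of $S$. Each $(\chi_i)_{T}$ is a multiple $f_i\varphi$ of $\varphi$, and by adjunction $m_i$ equals the multiplicity of $\varphi$ in $(\chi_i)_{T}$, that is $m_i=f_i$; comparing degrees gives $p\dim\varphi=\dim L^{S}=\sum_i f_i^{2}\dim\varphi$, so $\sum_i f_i^{2}=p$. As $p$ is prime, every $f_i=1$ and there are exactly $p$ summands: $L^{S}$ is multiplicity-free with $p$ irreducible constituents of degree $\dim\varphi$, each restricting to $\varphi$. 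In particular $V$ is one of them, so $\chi_{T}=\varphi\in\mathrm{Irr}(T)$. Next, each $\xi_i\in\mathrm{Irr}(G)$ yields, via the surjection $\mathbb{C}S\twoheadrightarrow\mathbb{C}(S/\!/T)=\mathbb{C}G$, a linear character of $\mathbb{C}S$ trivial on $\mathbb{C}T$, whence $\chi\xi_i\in\mathrm{Irr}(S)$, $(\chi\xi_i)_{T}=\varphi$, and so $\chi\xi_i$ is one of the $p$ constituents of $\varphi^{S}$. These $p$ characters are pairwise distinct: by Schur's lemma the operator $v\mapsto v\sigma_s$ on $V$, for $s\in S\setminus T$, becomes after the $\mathbb{C}T$-isomorphism $V_{T}\cong(V_{T})^{(s^{T})}$ a scalar on the irreducible module $L$, and that scalar is nonzero since $\sigma_s\sigma_{s^{*}}$ has a nonzero $\sigma_{1_{X}}$-coefficient; hence $\chi(\sigma_s)\ne0$ for all $s\in S\setminus T$, and since distinct irreducible characters of the cyclic group $C_p$ disagree at every non-identity element, $\chi\xi_i\ne\chi\xi_j$ for $i\ne j$. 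Therefore $\{\chi\xi_i\mid 1\le i\le p\}$ is exactly the set of irreducible constituents of $\varphi^{S}=(\chi_{T})^{S}$, i.e.\ $(\chi_{T})^{S}=\sum_{i=1}^{p}\chi\xi_i$. This is statement (1).

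The step I expect to be the crux is the multiplicity-freeness in the two cases: in the first it comes from the dimension squeeze $p\dim L\le\dim V\le\dim L^{S}=p\dim L$, while in the second it rests on the identity $\sum_i f_i^{2}=p$ together with the primality of $p$ --- the elementary shadow of the triviality of the Schur multiplier of a cyclic group, and the only point where $|G|$ being prime is genuinely used. A subordinate technical matter, on which everything else leans, is that $\mathbb{C}S$ is a strongly $G$-graded algebra over $\mathbb{C}T$ (so that each $L_g$ is nonzero, $\dim L^{S}=p\dim L$, $L_g\cdot(\mathbb{C}S)_h=L_{gh}$, and the surjection $\mathbb{C}S\to\mathbb{C}G$ exists); this is exactly what strong normality of $T$ delivers, and is the input we take from \cite{hanaki1,zies}.
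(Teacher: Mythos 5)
The paper gives no proof of Theorem~\ref{thm:key}; it is quoted from \cite{hanaki1}. Measured on its own terms, your argument has a genuine gap at its foundation: the grading $\mathbb{C}S=\bigoplus_{g\in G}\mathbb{C}(TsT)$ is \emph{not} strong in general, and $\mathbb{C}(TsT)$ is \emph{not} an invertible $(\mathbb{C}T,\mathbb{C}T)$-bimodule. Its dimension is $|TsT|$, which is typically much smaller than $\dim\mathbb{C}T=|T|$ (in the paper $|TsT|\in\{1,p\}$ while $|T|=p^2$), so $\mathbb{C}(TsT)\otimes_{\mathbb{C}T}\mathbb{C}(Ts^*T)$ is a proper ideal of $\mathbb{C}T$; this is exactly why Theorem~\ref{thm:irreducible2} must allow $L\otimes_{\mathbb{C}T}\mathbb{C}(TsT)=0$ and why the paper introduces $\mathrm{Supp}(L^S)$ as a possibly proper subset of $G$. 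Concretely, take $S$ the wreath product of the thin scheme $C_p\times C_p$ with the thin scheme $C_p$ (the commutative situation of Lemma~\ref{lem:35}, where $|TsT|=1$ for all $s\in S\setminus T$): for every nonprincipal $\varphi\in\mathrm{Irr}(T)$ one has $e_\varphi\sigma_s=0$, so $L_g=0$ for all $g\neq 1$, $G\{L\}=1$, and $\chi=\varphi^S$ restricts to a \emph{single} irreducible of $T$. This kills your case $G\{L\}=1$: the claim that the $L_g$ are ``$p$ distinct irreducibles all occurring in $V_T$'' and the dimension squeeze $p\dim L\le\dim V\le p\dim L$ both fail, and your conclusion ``$\chi_T$ is a sum of $p$ distinct irreducibles'' is simply false --- the theorem says ``at most $p$'' precisely because $\mathrm{Supp}(L^S)$ can be any nonempty subset of $G$ containing $1_G$ (cf.\ Remark~\ref{remark:2}, where $k=|\mathrm{Supp}(\psi^S)|$). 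The correct route in this case is to work with $\mathrm{Supp}(L^S)$ rather than with all of $G$: $V=\sum_{g\in\mathrm{Supp}}L_g$ with the nonzero $L_g$ pairwise non-isomorphic, which still gives the zero-diagonal argument for $\chi(\sigma_s)=0$ and $\psi^S=\chi$, but only ``at most $p$'' constituents.

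Your case $G\{L\}=G$ largely survives, because stability itself forces every $L\otimes\mathbb{C}(TsT)\cong L\neq 0$, so $\dim L^S=p\dim L$ and the $\sum_if_i^2=p$ computation are legitimate there without any strong-grading hypothesis. Two smaller points in that half still need care: (a) the dichotomy ``$G\{L\}$ trivial or all of $G$'' is fine, but you should not conflate $G\{L\}$ with $\mathrm{Supp}(L^S)$; (b) the distinctness of the $\chi\xi_i$ via ``the Schur scalar is nonzero because $\sigma_s\sigma_{s^*}$ has nonzero $\sigma_{1_X}$-coefficient'' is not a proof that $\chi(\sigma_s)\neq 0$ (the trace of a nonzero operator can vanish, and the relation between the scalar and the trace passes through an unspecified isomorphism $V_T\cong V_T^{(s^T)}$). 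A cleaner finish: the cyclic group $\mathrm{Irr}(G)$ permutes the $p$ distinct constituents of $\varphi^S$ by multiplication; a fixed point would satisfy $\chi\xi=\chi$ for all $\xi$, hence vanish off $T$ and be determined by $\chi_T=\varphi$, forcing all fixed points to coincide --- impossible for $p$ distinct characters unless the action is transitive, which yields $(\chi_T)^S=\sum_i\chi\xi_i$.
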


\begin{obs}\label{remark:2}
\emph{In Theorem \ref{thm:key}(1), $G\{\chi_{_T}\} = G$ and $\chi \xi_i \in \mathrm{Irr}(S)$.
In Theorem \ref{thm:key}(2), $G\{\psi \} = \{ 1_{_G} \}$ and $\chi_{_T}$ is a sum of $k$ distinct irreducible characters of $T$, where $k= |\mathrm{Supp}(\psi^S)|$.}
\end{obs}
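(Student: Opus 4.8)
The plan is to unpack the two alternatives of Theorem~\ref{thm:key} in terms of stabilizers and supports, using throughout that $G := S/\!/T$ has prime order $p$, so that the only subgroups of $G$ are $\{1_{_G}\}$ and $G$. First I would record, for an irreducible $\mathbb{C}T$-module $L$ affording a character $\varphi$ of $T$, three facts. (a) In $L^S = \bigoplus_{g \in G} L \otimes_{\mathbb{C}T} \mathbb{C}(TgT)$ the summand of degree $1_{_G}$ is $L \otimes_{\mathbb{C}T} \mathbb{C}T = L \ne 0$, so $1_{_G} \in \mathrm{Supp}(L^S)$; since $\mathrm{Supp}(L^S)$ is a union of left cosets of $G\{L\}$ and $|G| = p$, either $G\{L\} = \{1_{_G}\}$, or $G\{L\} = G$, in which case $\mathrm{Supp}(L^S) = G$ and $L \otimes_{\mathbb{C}T} \mathbb{C}(TgT) \cong L$ for every $g$. (b) By Theorem~\ref{thm:irreducible2} every summand is an irreducible $\mathbb{C}T$-module or $0$, so restricting the $\mathbb{C}S$-action to $\mathbb{C}T$ exhibits $(\varphi^S)_{_T}$ as the sum of the $|\mathrm{Supp}(L^S)|$ irreducible characters of $T$ afforded by the nonzero summands. (c) For $\xi \in \mathrm{Irr}(G)$ the linear map $\alpha_\xi \colon \mathbb{C}S \to \mathbb{C}S$ that multiplies the homogeneous component $\mathbb{C}(TgT)$ by the scalar $\xi(g)$ is a $\mathbb{C}$-algebra automorphism (because $g \mapsto \xi(g)$ is a homomorphism $G \to \mathbb{C}^{\times}$) and is the identity on $\mathbb{C}T$ (the component of degree $1_{_G}$); consequently, composing a representation affording $\eta \in \mathrm{Irr}(S)$ with $\alpha_\xi$ produces a representation with the same lattice of invariant subspaces that affords the product $\eta\xi$ occurring in Theorem~\ref{thm:key}(1).

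Assume first that we are in case~(1), so $\chi_{_T} = \varphi \in \mathrm{Irr}(T)$ is afforded by $L$ and $\varphi^S = \sum_{i=1}^{p} \chi\xi_i$ by Theorem~\ref{thm:key}(1). Since $\alpha_{\xi_i}$ fixes $\mathbb{C}T$ pointwise we get $(\chi\xi_i)_{_T} = \chi_{_T} = \varphi$ for each $i$, hence $(\varphi^S)_{_T} = p\varphi$. Comparing this with (b): $(\varphi^S)_{_T}$ is a direct sum of at most $|G| = p$ nonzero irreducible $\mathbb{C}T$-modules whose character is $p\varphi$, that is, whose underlying module is $L^{\oplus p}$; this forces all $p$ summands $L \otimes_{\mathbb{C}T} \mathbb{C}(TgT)$ to be nonzero and isomorphic to $L$, so by definition $G\{\chi_{_T}\} = G\{L\} = G$. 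Applying (c) with $\eta = \chi$ then shows each $\chi\xi_i$ is afforded by an irreducible representation, so $\chi\xi_i \in \mathrm{Irr}(S)$.

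Now assume we are in case~(2): $\chi(\sigma_s) = 0$ for $s \in S \setminus T$, $\psi$ is an irreducible constituent of $\chi_{_T}$ afforded by $L$, $\psi^S = \chi$, and, by Theorem~\ref{thm:key}(2), $\chi_{_T}$ is a sum of pairwise distinct irreducible characters (in particular it is multiplicity free). Restricting $\psi^S = \chi$ and using (b), $\chi_{_T} = (\psi^S)_{_T} = \bigoplus_{g \in \mathrm{Supp}(\psi^S)} L \otimes_{\mathbb{C}T} \mathbb{C}(TgT)$, whose degree-$1_{_G}$ summand is $L$, affording $\psi$. If $G\{\psi\}$ were $G$, then by (a) all $p \ge 2$ of these summands would be isomorphic to $L$, so $\psi$ would occur in $\chi_{_T}$ with multiplicity at least $2$, contradicting multiplicity freeness; as $G$ has prime order this leaves $G\{\psi\} = \{1_{_G}\}$. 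Finally, the displayed decomposition of $\chi_{_T}$ has exactly $k := |\mathrm{Supp}(\psi^S)|$ nonzero summands, each irreducible, and if two of them were isomorphic $\chi_{_T}$ would again fail to be multiplicity free; hence $\chi_{_T}$ is the sum of these $k$ pairwise distinct irreducible characters of $T$.

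The steps needing care are those in (c): that $\alpha_\xi$ is genuinely an algebra automorphism restricting to the identity on $\mathbb{C}T$, and that the character of the $\alpha_\xi$-twist is exactly the product $\chi\xi_i$ of Theorem~\ref{thm:key}(1) (a normalization check). Everything else is bookkeeping that combines Theorem~\ref{thm:key}, Theorem~\ref{thm:irreducible2} and the hypothesis $|G| = p$, and I do not anticipate a genuine obstacle: the ambient Clifford theory of the $G$-graded algebra $\mathbb{C}S$ is available from the references on which this subsection is based.
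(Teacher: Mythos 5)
Your argument is correct. Note that the paper offers no proof of this remark at all: it is stated as an unproved consequence of the quoted Clifford theory of Hanaki (Theorems \ref{thm:irreducible2} and \ref{thm:key}), so there is nothing to compare against, and your self-contained derivation --- the support/stabilizer bookkeeping for the $G$-graded algebra $\mathbb{C}S$ with $|G|=p$ prime, together with the twist of a representation by the algebra automorphism $\alpha_{\xi}$ --- is exactly the standard argument the authors are implicitly invoking. The only two points that depend on reading the quoted results as intended are (i) that ``a sum of at most $p$ distinct irreducible characters'' in Theorem \ref{thm:key}(2) means multiplicity-free, which your case-(2) argument uses both to rule out $G\{\psi\}=G$ and to get exactly $k=|\mathrm{Supp}(\psi^S)|$ distinct constituents, and (ii) that $\chi\xi_i$ denotes the product character $\sigma_s\mapsto \xi_i(s^T)\chi(\sigma_s)$, which your $\alpha_{\xi_i}$-twist realizes; both are the intended readings in \cite{hanaki1}, so there is no gap.
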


\begin{thm}[see \cite{hanaki}]\label{thm:key1}
Let $(X,S)$ be a scheme, $T$ a closed subset of $S$ and $\psi \in \mathrm{Irr}(T)$. Then
\[ (n_{_S}/n_{_T})m_\psi = \sum_{\chi \in Irr(S)} (\psi^S, \chi)_{_S} m_\chi ,\]
where $(\psi^S, \chi)_{_S} = a_\chi$ for $\psi^S = \sum_{\varphi \in \mathrm{Irr}(S)} a_\varphi \varphi$, and $m_\psi$ and $m_\chi$ are the multiplicities on $T$ and $S$, respectively.
\end{thm}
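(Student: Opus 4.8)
The plan is to reduce the identity to a single relation between standard characters together with Frobenius reciprocity for the pair $\mathbb{C}T\subseteq\mathbb{C}S$. The first step is to prove that, as characters of $T$,
\[ (\tau_{_S})_{_T} = (n_{_S}/n_{_T})\,\tau_{_T}. \]
Let $X_1,\dots,X_m$ be the cosets of $T$ in $X$; then $m=|X/T|$ equals the index $n_{_S}/n_{_T}$ and each $X_i$ has size $n_{_T}$. For $x\in X_i$ and $t\in T$ we have $xt\subseteq X_i$, so $\mathbb{C}X_i$ is a $\mathbb{C}T$-submodule of $\mathbb{C}X$ and $\mathbb{C}X=\bigoplus_{i=1}^m\mathbb{C}X_i$ as $\mathbb{C}T$-modules. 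Since the adjacency algebra of the subscheme $(X,S)_{X_i}$ is isomorphic to $\mathbb{C}T$ and $\mathbb{C}X_i$ is its natural module, $\mathbb{C}X_i$ affords the standard character $\tau_{_T}$; summing over $i$ gives the displayed equality.

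Next I would expand both sides into irreducible characters. Using $\tau_{_T}=\sum_{\psi\in\mathrm{Irr}(T)}m_\psi\psi$ and $\tau_{_S}=\sum_{\chi\in\mathrm{Irr}(S)}m_\chi\chi$, restriction gives $(\tau_{_S})_{_T}=\sum_{\chi\in\mathrm{Irr}(S)}m_\chi\,\chi_{_T}$, hence
\[ (n_{_S}/n_{_T})\sum_{\psi\in\mathrm{Irr}(T)}m_\psi\psi \;=\; \sum_{\chi\in\mathrm{Irr}(S)}m_\chi\,\chi_{_T}. \]
Comparing, for a fixed $\psi\in\mathrm{Irr}(T)$, the multiplicity of $\psi$ on the two sides (the elements of $\mathrm{Irr}(T)$ being linearly independent) yields $(n_{_S}/n_{_T})m_\psi=\sum_{\chi\in\mathrm{Irr}(S)}m_\chi\,(\chi_{_T}:\psi)$, where $(\chi_{_T}:\psi)$ denotes the multiplicity of $\psi$ in the genuine character $\chi_{_T}$.

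It then remains to identify $(\chi_{_T}:\psi)$ with $(\psi^S,\chi)_{_S}$, that is, with the coefficient $a_\chi$ of $\chi$ in $\psi^S=\sum_{\varphi\in\mathrm{Irr}(S)}a_\varphi\varphi$. This is Frobenius reciprocity: if $L$ affords $\psi$ and $M_\chi$ is the irreducible $\mathbb{C}S$-module affording $\chi$, the tensor--hom adjunction gives
\[ \mathrm{Hom}_{\mathbb{C}S}(L\otimes_{\mathbb{C}T}\mathbb{C}S,\,M_\chi)\;\cong\;\mathrm{Hom}_{\mathbb{C}T}(L,\,(M_\chi)_{_T}), \]
and since $\mathbb{C}S$ and $\mathbb{C}T$ are semisimple over the algebraically closed field $\mathbb{C}$, the left-hand side has dimension $a_\chi$ and the right-hand side has dimension $(\chi_{_T}:\psi)$. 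Substituting $(\chi_{_T}:\psi)=a_\chi=(\psi^S,\chi)_{_S}$ into the previous formula finishes the proof. The only nonformal ingredient is the first step, which rests on the standard description of the subscheme on a coset of a closed subset and the identification of $\mathbb{C}X_i$ with its natural module (see \cite{zies}); granting that, the rest is bookkeeping and adjunction.
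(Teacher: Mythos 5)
Your proof is correct. Note that the paper itself gives no proof of this statement --- it is quoted from Hanaki's \emph{Clifford theory for association schemes} --- so there is nothing to compare line by line; your argument (decomposing the restriction of the standard module as $(\tau_{_S})_{_T}=(n_{_S}/n_{_T})\tau_{_T}$ via the coset subschemes, and then identifying $(\chi_{_T}:\psi)$ with $a_\chi$ by tensor--hom adjunction, which needs only semisimplicity of $\mathbb{C}T$ and $\mathbb{C}S$ over the algebraically closed field $\mathbb{C}$) is the standard route and matches the spirit of the cited source.
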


\subsection{Partial linear spaces and difference families}\label{sec:pardf}

In \cite[Section 4]{mp}, it is shown that any reduced Klein configuration defines an incidence structure. By a parallel argument, we will obtain a partial linear space over a scheme under certain assumption.

In geometric terms, a \textit{partial linear space} is a pair $(\mathcal{P},\mathcal{L})$, where $\mathcal{P}$ is a set of points and $\mathcal{L}$ is a family of subsets of $\mathcal{P}$, called \textit{lines}, such that every pair of points is contained in at most one line and every line contains at least two points. Moreover, $(\mathcal{P}, \mathcal{L})$ is called a \textit{linear space} if every pair of points is contained in exactly one line.

Let $(X,S)$ be an association scheme with $T:=\Or(S) \cong C_p \times C_p$ and let
$\{ X_i \mid i=1, 2, \dots, m  \}$ denote the set of cosets of T in $S$.
Then we define the \textit{left stabilizer} of $s$
\[L(s):=\{ t \in T \mid ts=s \}.\]

Note that $L(s) = L(s')$ for $s' \in TsT$.
For given $i, j$ and $s \in S$ with $s \cap (X_i\times X_j) \neq \varnothing$, we define $L_{ij}(s):= L(s)$.
Since $L_{ij}(s) = L_{ij}(s')$ for $s' \in TsT$, we write $L_{ij}$ for $L_{ij}(s)$.
For $\{ 1_{_X} \} < M < T$, we put $L_i(M):= \{ i \} \cup \{ j \mid L_{ij}=M \}$.

Define an incidence structure $(\mathcal{P}, \mathcal{L})$ as follows:
\begin{enumerate}
\item $\mathcal{P}:= \{ i \mid i= 1, 2, \dots, m  \}$;
\item $\mathcal{L}:= \{ L_i(M) \mid \{1_{_X}\} < M < T, |L_i(M)|\neq 1, 1 \leq i \leq m \}$.
\end{enumerate}
An \textit{automorphism} $\tau$ of $(\mathcal{P}, \mathcal{L})$ is a permutation of $\mathcal{P}$ such that $\mathcal{L}^\tau = \mathcal{L}$.
We denote by $\mathrm{Aut}(\mathcal{P}, \mathcal{L})$ the set of automorphisms of $(\mathcal{P}, \mathcal{L})$.

\begin{lem}\label{lem:lemsss}
Let $(X,S)$ be an association scheme such that $T:= \mathbf{O}_\mathrm{\theta}(S) = \Or(S) \cong C_p \times C_p$.
For $s_1, s_2 \in S \setminus T$, if $L(s_1) = L(s_2) \cong C_p$,
then either $s_2^* s_1 \subseteq T$ or $s_2^* s_1 = s_3$ for some $s_3 \in S \setminus T$.
\end{lem}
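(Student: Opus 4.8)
The plan is to reduce the statement to a short valency count; the dichotomy itself is forced by passing to the quotient $S/\!/T$. Since $T=\Or(S)$ it is strongly normal, so $G:=S/\!/T$ is a group, hence a thin scheme, and $s\mapsto s^T$ carries complex products in $S$ into complex products in $G$; consequently every $u\in s_2^*s_1$ satisfies $u^T=(s_2^T)^*\,s_1^T=:g$, a single element of $G$. If $g=1_G$ then $s_2^*s_1\subseteq\{u\in S\mid u^T=1_G\}=T$, and we are in the first case. So we may assume $g\neq 1_G$; then $s_2^*s_1$ is contained in $TsT=\{u\in S\mid u^T=g\}$, which is disjoint from $T$, and it remains only to prove $|s_2^*s_1|=1$.

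Write $M:=L(s_1)=L(s_2)$, a subgroup of $T$ of order $p$, and recall that $m\in L(s)$ means $ms=s$, i.e. $(x,y)\in s\Leftrightarrow(xm,y)\in s$. First I would observe that, for a fixed $(x,y)\in u$, the set $xs_2^*\cap ys_1^*=\{z\mid(z,x)\in s_2,\ (z,y)\in s_1\}$, whose cardinality equals $c_{s_2^*s_1}^{u}$, is stable under the translations $z\mapsto zm$ with $m\in M$ (using $m\in L(s_1)\cap L(s_2)$); since these translations give an action of $M$ on $X$ with every orbit of size $p$, it follows that $p\mid c_{s_2^*s_1}^{u}$ for every $u\in s_2^*s_1$. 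Next I would show $n_{s_1}=n_{s_2}=p$: because $G$ is thin, $xs$ lies in a single coset of $T$ for each $s\in S$, so $n_s\le n_T=p^2$; since $(X,S)$ is a $p$-scheme and $s_i\notin\Ot(S)$ this leaves $n_{s_i}\in\{p,p^2\}$, and $n_{s_i}=p^2=n_T$ would make $xs_i$ a full $T$-coset for every $x$, hence $L(s_i)=T$, contradicting $L(s_i)\cong C_p$.

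Finally, taking row sums in $\sigma_{s_2^*}\sigma_{s_1}=\sum_u c_{s_2^*s_1}^{u}\sigma_u$ gives $\sum_{u\in s_2^*s_1}c_{s_2^*s_1}^{u}\,n_u=n_{s_1}n_{s_2}=p^2$. Every $u$ occurring here lies in $S\setminus T$, so $n_u\ge p$ (again since $(X,S)$ is a $p$-scheme), while $c_{s_2^*s_1}^{u}\ge p$ by the previous paragraph; hence $p^2=\sum_u c_{s_2^*s_1}^{u}\,n_u\ge p^2\,|s_2^*s_1|$, forcing $|s_2^*s_1|=1$, say $s_2^*s_1=\{s_3\}$ with $s_3\in S\setminus T$, which is the second case. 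The points I expect to need the most care are the verification that $M$ genuinely stabilises the witness set $xs_2^*\cap ys_1^*$ with all orbits of full size $p$ (which gives $p\mid c_{s_2^*s_1}^{u}$) and the short argument ruling out valency $p^2$; once these are in place the conclusion is the single inequality above.
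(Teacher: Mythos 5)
Your argument follows the same route as the paper's own (very terse) proof: pass to the quotient group $S/\!/T$ to see that $s_2^*s_1$ is supported on a single coset $TsT$, pin down the valencies $n_{s_1}=n_{s_2}=p$, observe that every structure constant $c_{s_2^*s_1}^{u}$ is divisible by $p$ because the witness set $xs_2^*\cap ys_1^*$ is stable under the free action of $M=L(s_1)=L(s_2)$, and close with the row-sum identity $\sum_u c_{s_2^*s_1}^{u}n_u=n_{s_1}n_{s_2}=p^2$. These are exactly the ingredients the paper compresses into two sentences, and your explicit treatment of the divisibility step is the part the paper leaves implicit.

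The one step that is not justified as written is your double appeal to ``since $(X,S)$ is a $p$-scheme'' --- first to get $n_{s_i}\in\{p,p^2\}$ and again to get $n_u\ge p$ for $u\in s_2^*s_1$. That is not a hypothesis of the lemma, and the lemma is applied in the paper (via Theorem \ref{thm:psp}) to schemes that need not be $p$-schemes; in Theorem \ref{thm:main3} the quotient $S/\!/T$ is cyclic of order $q$ with $q$ possibly different from $p$, so the order of $(X,S)$ need not be a power of $p$. Fortunately both facts follow from the structure of $T$ alone: since $G=S/\!/T$ is thin, $xTsT$ is a single coset $yT$, and every point of $yT$ lies in $xu$ for exactly one $u\in TsT$, whence $|TsT|\cdot n_s=n_{_T}=p^2$. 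Thus $n_s$ divides $p^2$ for every $s\in S\setminus T$, and $n_s>1$ (as $s\notin\mathbf{O}_\mathrm{\theta}(S)=T$) forces $n_s\in\{p,p^2\}$. This repairs both uses; your exclusion of $n_{s_i}=p^2$ via $L(s_i)=T$, and the final inequality $p^2\ge p\cdot p\cdot|s_2^*s_1|$, then go through unchanged.
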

\begin{proof}
Note that $n_{st}=n_{ts}=n_s$ and $n_s > 1$ for each $s \in S \setminus T$ and $t \in T$.
Since $T$ acts transitively on $TsT$ and $n_{_T} = p^2$, the length of $T$-orbit is $1$ or $p$. Thus,
$n_s$ is $p$ or $p^2$ for $s \in S \setminus T$.

Suppose $s_2^* s_1 \nsubseteq T$. Then $\sigma_{s_2^*}\sigma_{s_1}$ is a linear combination of at most $p$ adjacency matrices.
Since $s_2^* s_1 = s_2^* t s_1$ for each $t \in L(s_1)$, $\sigma_{s_2^*}\sigma_{s_1} = p \sigma_{s_3}$ for some $s_3 \in S \setminus T$.
\end{proof}

Using the key idea in the proof of \cite[Lemma 4.3]{mp}, we show the following theorem.
\begin{thm}\label{thm:psp}
Let $(X,S)$ be an association scheme such that $T:= \mathbf{O}_\mathrm{\theta}(S) = \Or(S) \cong C_p \times C_p$.
Then $(\mathcal{P}, \mathcal{L})$ is a partial linear space such that any point belongs to at most $p+1$ lines,
and $\mathrm{Aut}(X/T,S/\!/T)$ is a subgroup of $\mathrm{Aut}(\mathcal{P}, \mathcal{L})$.
\end{thm}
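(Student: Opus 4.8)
The plan is to establish the three defining properties of a partial linear space in turn and then the automorphism claim. The first property — every line has at least two points — is immediate, since $i\in L_i(M)$ always and membership in $\mathcal L$ already requires $|L_i(M)|\ne 1$. For the rest, the preliminaries needed are: for $s\in S\setminus T$ one has $n_s\in\{p,p^2\}$ (as in the proof of Lemma~\ref{lem:lemsss}); writing $R(s):=\{t\in T\mid st=s\}=L(s^\ast)$, a count of coefficients in $\sigma_s\sigma_{s^\ast}=\sum_{t\in T}c_t\sigma_t$ (where $c_{1_X}=n_s$, the $c_t$ are constant on cosets of $L(s)$, and $|L(s)|=n_s$) gives $\sigma_s\sigma_{s^\ast}=n_s\,\sigma_{\mathbf 1_{L(s)}}$ and likewise $\sigma_{s^\ast}\sigma_s=n_s\,\sigma_{\mathbf 1_{R(s)}}$, where $\sigma_{\mathbf 1_U}:=\sum_{u\in U}\sigma_u$; in particular $|L(s)|=|R(s)|=n_s$, so $L(s)\cong C_p$ when $n_s=p$ and $L(s)=R(s)=T$ when $n_s=p^2$, whence $L_{ij}\cong C_p$ if and only if $L_{ji}\cong C_p$. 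Finally, all $s\in S$ meeting $X_i\times X_j$ lie in a single double coset of $T$, namely the one corresponding to the relation of the group scheme $S/\!/T$ that joins $X_i$ and $X_j$, so $L_{ij}$ depends only on that relation.

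The heart of the proof is the assertion: \emph{if $i,\alpha,\beta$ are distinct points with $L_{i\alpha}=L_{i\beta}=M\cong C_p$, then $L_{\beta\alpha}=L_{\beta i}$ and $L_{\alpha\beta}=L_{\alpha i}$.} I would pick $s_\alpha,s_\beta\in S$ meeting $X_i\times X_\alpha$ and $X_i\times X_\beta$, so $L(s_\alpha)=L(s_\beta)=M\cong C_p$; Lemma~\ref{lem:lemsss} applies to $s_\beta,s_\alpha$, and since $s_\beta^\ast s_\alpha$ joins $X_\beta$ to $X_\alpha$ with $\alpha\ne\beta$ it is not contained in $T$, so $s_\beta^\ast s_\alpha=\{s_3\}$ with $s_3\in S\setminus T$ joining $X_\beta$ to $X_\alpha$ and $\sigma_{s_\beta^\ast}\sigma_{s_\alpha}=c\,\sigma_{s_3}$, where $c\,n_{s_3}=n_{s_\beta}n_{s_\alpha}=p^2$. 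Using $\sigma_{s_\alpha}\sigma_{s_\alpha^\ast}=p\,\sigma_{\mathbf 1_M}$ and $\sigma_{s_\beta^\ast}\sigma_m=\sigma_{s_\beta^\ast}$ for $m\in M\ (=L(s_\beta)=R(s_\beta^\ast))$, I would compute $\sigma_{s_\beta^\ast}\sigma_{s_\alpha}\sigma_{s_\alpha^\ast}\sigma_{s_\beta}$ two ways — directly it equals $p^3\sigma_{\mathbf 1_{R(s_\beta)}}$, and through $c\,\sigma_{s_3}$ it equals $c^2 n_{s_3}\,\sigma_{\mathbf 1_{L(s_3)}}$, both being matrices on the $X_\beta\times X_\beta$ block — hence $c\,\sigma_{\mathbf 1_{L(s_3)}}=p\,\sigma_{\mathbf 1_{R(s_\beta)}}$. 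Reading off a diagonal entry (each side carries the term $1_X$) gives $c=p$, hence $n_{s_3}=p$, $L(s_3)\cong C_p$, and $\sigma_{\mathbf 1_{L(s_3)}}=\sigma_{\mathbf 1_{R(s_\beta)}}$; since $R(s_\beta)\subseteq L(s_3)$ (as $\sigma_m\sigma_{s_\beta^\ast}=\sigma_{s_\beta^\ast}$ for $m\in R(s_\beta)=L(s_\beta^\ast)$) and both have order $p$, $L(s_3)=R(s_\beta)$. Thus $L_{\beta\alpha}=L(s_3)=R(s_\beta)=L(s_\beta^\ast)=L_{\beta i}$, and interchanging $\alpha$ and $\beta$ gives the other equality. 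This assertion — essentially, recognising the correct transitivity statement and getting it into a shape where Lemma~\ref{lem:lemsss} and the stabilizer identities can be fed in, with the valency dichotomy forced at the end — is the step I expect to be the main obstacle; the rest is bookkeeping.

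Granting the assertion, ``at most one line through two points'' follows by a finite case check. Fix distinct $i,j$. If $L_{ij}=T$ there is no line through $i$ and $j$: a line $L_k(M)$ with $k\notin\{i,j\}$ would force $L_{ki}=L_{kj}=M\cong C_p$, whence $L_{ij}=L_{ik}$ by the assertion, contradicting $L_{ij}=T$ since $L_{ik}\cong C_p$ (note $|L_{ik}|=|L_{ki}|$), while a line centred at $i$ or $j$ would need $L_{ij}$ or $L_{ji}$ to be $\cong C_p$. If $L_{ij}\cong C_p$, then $L_i(L_{ij})$ and $L_j(L_{ji})$ are lines through $i$ and $j$, and any line $L_k(M)$ through $i,j$ with $k\notin\{i,j\}$ has $L_{ki}=L_{kj}=M$; applying the assertion with base point $i$, then $j$, then $k$ shows all these lines have the same point set, so the line through $\{i,j\}$ is unique. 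Hence $(\mathcal P,\mathcal L)$ is a partial linear space.

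For the bound on lines through a point $i$: by the uniqueness just proved, a line $L_k(M)$ containing $i$ with $k\ne i$ has $L_{ki}=M\cong C_p$, so $L_{ik}\cong C_p$, and the unique line through $\{i,k\}$ is simultaneously $L_k(M)$ and $L_i(L_{ik})$; thus every line through $i$ has the form $L_i(N)$ with $N\cong C_p$, and distinct $N$ give disjoint $L_i(N)\setminus\{i\}$, so $i$ lies on at most $p+1$ lines — one per subgroup of $T\cong C_p\times C_p$ of order $p$. Lastly, any $\phi\in\mathrm{Aut}(X/T,S/\!/T)$ permutes the $T$-cosets (so acts on $\mathcal P$) and fixes every relation of $S/\!/T$; since $L_{ij}$ depends only on that relation, $L_{i^\phi j^\phi}=L_{ij}$ for all $i,j$, so $L_i(M)^\phi=L_{i^\phi}(M)$ and $\mathcal L^\phi=\mathcal L$. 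Hence $\mathrm{Aut}(X/T,S/\!/T)$ embeds in $\mathrm{Aut}(\mathcal P,\mathcal L)$, completing the plan.
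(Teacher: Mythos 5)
Your proposal is correct and follows essentially the same route as the paper: the same key transitivity statement (the paper's claim that $j\in L_i(M)$ implies $L_i(M)=L_j(L_{ji})$, driven by Lemma~\ref{lem:lemsss}), then the same case analysis for uniqueness, the subgroup count for the $p+1$ bound, and the same automorphism argument. The only difference is that you supply an explicit verification (via the two-way computation of $\sigma_{s_\beta^\ast}\sigma_{s_\alpha}\sigma_{s_\alpha^\ast}\sigma_{s_\beta}$) of the identity $L(s_3)=L(s_2^\ast)$, which the paper states without justification.
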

\begin{proof}
If $\{ j \in \mathcal{P} \mid \{1_{_X}\} < L_{ij} < T\} = \varnothing$ for each $i \in \mathcal{P}$,
then $(\mathcal{P}, \mathcal{L})$ is a partial linear space without lines.
Suppose that $\{ j \in \mathcal{P} \mid \{1_{_X}\} < L_{ij} < T\} \neq \varnothing$ for some $i \in \mathcal{P}$.
By definition, any line contains at least two points.

First, we claim that for $i \neq j$,
\begin{equation}\label{B}
 j \in L_i(M) \Rightarrow L_i(M) = L_j(L_{ji}).
\end{equation}

To show $L_i(M) \supseteq L_j(L_{ji})$, let $k \in L_j(L_{ji})$.
If $k$ is $i$ or $j$, then clearly $k \in L_i(M)$.
Assume $k \neq i,j$. Then $L_{jk}=L_{ji}$ so that
$L_{jk}=L_{jk}(s_1)$ and $L_{ji}=L_{ji}(s_2)$ for some $s_1, s_2 \in S \setminus T$.
Since $k \neq i$, we have $s_2 \notin Ts_1T$. By Lemma \ref{lem:lemsss}, $s_2^*s_1 = s_3$ for some $s_3 \in S \setminus T$.
Since $L(s_2^*)=L(s_3)\cong C_p$, we have $L_{ij}=L_{ik}$. Thus, by definition $k \in L_i(L_{ij})=L_i(M)$.

Since $i \in L_j(L_{ji})$, by the same argument as above $L_i(L_{ij}) \subseteq L_j(L_{ji})$. Thus, $L_i(M) = L_j(L_{ji})$.

Now we prove that for any $k, k' \in \mathcal{P}$ and for $\{ 1_{_X}\} < H, H' < T$,
\[ |L_k(H) \cap L_{k'}(H')|\geq 2 \Rightarrow L_k(H) = L_{k'}(H'). \]
Suppose that $i, j \in L_k(H) \cap L_{k'}(H')$ and $i\neq j$.
We divide our consideration into two cases.
\begin{enumerate}
\item $i \neq k$.

By the definition, we have $L_{ki}=H$. Also, by (\ref{B}) $L_k(H)=L_k(L_{ki})=L_i(L_{ik})$.
Since $j \in L_k(H)= L_i(L_{ik})$, we have $L_{ik}=L_{ij}$.
It follows from (\ref{B}) that $L_i(L_{ik})= L_i(L_{ij})= L_j(L_{ji})$.
Thus, $L_k(H)= L_i(L_{ij})= L_j(L_{ji})$.

\item $i=k$.

Since $i\neq j$, we have $j\neq k$. By argument of case(i), we get $L_k(H)= L_i(L_{ij})= L_j(L_{ji})$.
\end{enumerate}

Similarly, we consider the following cases: $i \neq k'$ and $i=k'$. We can get $L_{k'}(H')= L_i(L_{ij})= L_j(L_{ji})$.
Therefore, $L_k(H) = L_{k'}(H')$, i.e., any two distinct points belong to at most one line.
Since $T$ has exactly $p+1$ proper subgroups, any point belongs to at most $p+1$ lines.

Finally, we show that
\[\mathrm{Aut}(X/T,S/\!/T) \leq\mathrm{Aut}(\mathcal{P}, \mathcal{L}).\]
It suffices to prove that $\phi(L_i(M))$ belongs to $\mathcal{L}$ for any $\phi$ and $L_i(M) \in \mathcal{L}$.
For convenience, we identify $X/T$ with $\mathcal{P}$.
Note that $L_i(M)^\phi = \{i^\phi \} \cup \{ j^\phi \mid L_{ij}=M \}$.
Since $L_{ij}=L_{i^\phi j^\phi}$, $L_i(M)^\phi = L_{i^\phi}(M) \in \mathcal{L}$.
\end{proof}

\begin{cor}\label{cor:lem32}
Under the assumption of Theorem \ref{thm:psp},
if $\{ n_s \mid s \in S \} = \{1, p \}$, then $(\mathcal{P}, \mathcal{L})$ is a linear space.
Moreover, $\frac{n_{_S}}{n_{_T}}  \leq p^2 + p + 1$.
\end{cor}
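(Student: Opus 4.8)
The plan is to first upgrade the partial linear space supplied by Theorem~\ref{thm:psp} to a linear space, and then read the bound off from its combinatorics.

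\textbf{Linear space.} By Theorem~\ref{thm:psp} two distinct points lie on at most one line, so it suffices to show that any two distinct points $i,j$ lie on \emph{some} line, and for this it is enough to show that $L_{ij}$ is a proper nontrivial subgroup of $T$; equivalently, since $T\cong C_p\times C_p$ has subgroups only of orders $1,p,p^2$, that $L_{ij}\cong C_p$, because then $L_i(L_{ij})$ is a line containing both $i$ and $j$. Write $L_{ij}=L(s)$ for a relation $s\in S\setminus T$ with $s\cap(X_i\times X_j)\neq\varnothing$; since $T=\mathbf{O}_\mathrm{\theta}(S)$ and $\{n_s\mid s\in S\}=\{1,p\}$ we have $n_s=p$. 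I would first rule out $L(s)=T$: as $T$ acts transitively on $X_i$, the equality $L(s)=T$ would force $xs$ to be independent of $x\in X_i$, hence some $y$ would have $ys^*=X_i$ and $n_{s^*}=|X_i|=p^2\neq p$, a contradiction. To rule out $L(s)=\{1_{_X}\}$, I would use that $\Or(S)=T$ forces $S/\!/T$ to be thin, so each ordered pair of $T$-cosets is realised by exactly one element of $S/\!/T$; consequently $TsT=\{u\in S\mid u^T=s^T\}$, and since the restrictions to $X_i\times X_j$ of the relations in this set partition the $p^2\times p^2$ grid $X_i\times X_j$, one gets $\sum_{u\in TsT}n_u=n_{_T}=p^2$, whence $|TsT|=p$. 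Now $T$ acts on the $p$-element set $TsT$ by left multiplication with $s$-stabiliser $L(s)$, so the orbit of $s$ has size $p^2/|L(s)|\le p$, i.e.\ $|L(s)|\ge p$; combined with $L(s)\neq T$ this gives $L(s)\cong C_p$. Hence every pair of points lies on exactly one line and $(\mathcal{P},\mathcal{L})$ is a linear space.

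\textbf{The bound.} Note that $n_{_S}/n_{_T}=|X/T|=|\mathcal{P}|=:m$. By Theorem~\ref{thm:psp} every point of $\mathcal{P}$ lies on at most $p+1$ lines. In a linear space this also bounds the number of points on any single line by $p+1$: given a line $\ell$ and a point $k\notin\ell$, each of the $\le p+1$ lines through $k$ meets $\ell$ in exactly one point (two common points would force that line to equal $\ell$), and in a linear space $k$ is joined to every point of $\ell$, so $|\ell|\le p+1$; the only exceptional configuration, where $\mathcal{P}$ is itself a single line, must be handled by a short separate argument. Fixing a point $P$, the remaining $m-1$ points are partitioned among the $\le p+1$ lines through $P$, each carrying at most $p$ points other than $P$, so $m-1\le(p+1)p$ and therefore $n_{_S}/n_{_T}=m\le p^2+p+1$.

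The crux is the identity $|TsT|=p$ forcing $L(s)$ to be nontrivial; once this and the easy exclusion of $L(s)=T$ are in hand, the remainder is standard line-counting for linear spaces, with only the degenerate single-line configuration needing extra care.
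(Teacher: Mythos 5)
Your argument follows the same route as the paper's, and the first half is in fact more careful than the published proof, which merely asserts ``by the assumption, $L_{ij}$ is always a proper subgroup'': your exclusion of $L(s)=T$ via $n_{s^*}=p\neq p^2$, and of $L(s)=\{1_{_X}\}$ via $|TsT|=p$ together with the orbit count $|Ts|=p^2/|L(s)|\le p$, are both correct. The line-counting in the second half is also exactly the paper's ($m-1\le p(p+1)$ after bounding line sizes by $p+1$).

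The one genuine issue is the step you explicitly flag but do not carry out: bounding $|\ell|$ by $p+1$ requires a point off $\ell$, so the configuration in which $\mathcal{P}$ is a single line is not covered (the paper's proof silently makes the same assumption, so you have at least located the weak spot). This case does need an argument, because nothing in the pure incidence combinatorics rules out one huge line; it must be excluded scheme-theoretically. Here is how to close it. Suppose some line $L_1(M)$ equals $\mathcal{P}$, so $L_{1j}=M$ for every $j\neq 1$. Since $S/\!/T$ is thin, every $s\in S\setminus T$ satisfies $s\cap(X_1\times X_j)\neq\varnothing$ for exactly one $j$ (namely $X_j=X_1 s^T$), whence $L(s)=L_{1j}=M$ for \emph{every} $s\in S\setminus T$. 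On the other hand, for $s\in S\setminus T$ with $n_s=p$ and $|L(s)|=p$ one computes $\sigma_s\sigma_{s^*}=p\sum_{t\in L(s)}\sigma_t$, i.e.\ $s s^*=L(s)$, so $\bigcup_{u\in S}u^*u\subseteq M$ and therefore $\Or(S)\subseteq M\subsetneq T$, contradicting $\Or(S)=T\cong C_p\times C_p$. With this paragraph inserted, your proof is complete.
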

\begin{proof}
Clearly, $(\mathcal{P}, \mathcal{L})$ is a partial linear space. By the assumption, for given $i \neq j$ $L_{ij}$ is always a proper subgroup. Thus, every pair of points is contained in exactly one line.

We claim that each line contains at most $p+1$ points.
If not, any point which is not in the line containing at least $p+2$ points would belong to at least $p+2$ lines, a contradiction by Theorem \ref{thm:psp}.
Therefore, for a fixed point, that point belongs to at most $p+1$ lines, each of which containing at most $p+1$ points.
This implies that $|\mathcal{P}| \leq (p+1)^2 -p = p^2 +p +1$.

\end{proof}

Let $G$ be a finite group of order $v$ acting on itself by right multiplication, and let $\mathbf{S}=\{B_1, \dots, B_s\}$
be a family of subsets of $G$ satisfying
\[ |\{ g \in G \mid gB_j=B_j \}|=1 ~~\text{for}~ 1 \leq j \leq s, \]
and put $K:= \{ |B_j| \mid 1 \leq j \leq s\}$.
If every non-identity element of $G$ occurs $\lambda$ times in $\bigcup_{1 \leq j \leq s} \Delta B_j$, where
$\Delta B_j := \{ ab^{-1} \mid a, b \in B_j, a \neq b \}$, then $\mathbf{S}$ is called a $(v, K, \lambda)$-\textit{difference family} in $G$.

\begin{prop}[\cite{beth}]\label{prop:prop1}
Let $G$ be a finite group of order $v$, and let $\mathbf{S}=\{B_1, \dots, B_s\}$
be a family of subsets of $G$ satisfying $|\{ g \in G \mid gB_j=B_j \}|=1$ for $1 \leq j \leq s$.
Then $\mathbf{S}$ is a $(v, K, 1)$-difference family if and only if $(\mathcal{P}, \mathcal{L})$ is a linear space,
where $\mathcal{P}:=G$ and $\mathcal{L}:=\{ B_jg \mid g \in G, 1 \leq j \leq s \}$.
\end{prop}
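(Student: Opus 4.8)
The plan is to recognize this as a standard translation between difference families and linear spaces in the group-algebra language, and to prove both implications by a counting argument on the multiset $\bigcup_j \Delta B_j$. First I would fix notation: write $\mathcal{P} = G$ and $\mathcal{L} = \{B_j g \mid g \in G,\ 1 \le j \le s\}$. The hypothesis $|\{g \in G \mid gB_j = B_j\}| = 1$ guarantees (since the stabilizer of $B_j$ under left multiplication is trivial) that, for each $j$, the $v$ translates $B_j g$ with $g \in G$ are pairwise distinct, so $|\mathcal{L}| = vs$ (or rather, that each $B_j$ contributes exactly $v$ distinct blocks), and moreover that the number of pairs of points lying in a common translate of a fixed $B_j$ equals $v \cdot \binom{|B_j|}{2}$. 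The key observation is the bijection between (ordered) pairs of distinct points contained in some block and occurrences of non-identity elements in the difference multiset: the pair $(x, y)$ with $x \ne y$ lies in a translate of $B_j$ if and only if $x = a g$, $y = b g$ for some $a, b \in B_j$, $a \ne b$, and then $xy^{-1} = a b^{-1} \in \Delta B_j$; conversely each such representation $x y^{-1} = a b^{-1}$ determines $g = b^{-1} y$ hence the block $B_j g$ containing both $x$ and $y$.

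The forward direction is then immediate: if $\mathbf{S}$ is a $(v, K, 1)$-difference family, then every non-identity element of $G$ occurs exactly once in $\bigcup_j \Delta B_j$, so every ordered pair $(x,y)$ of distinct points of $\mathcal{P}$ lies in exactly one block $B_j g \in \mathcal{L}$; since every block has size at least $2$ (as $|B_j| \in K$ and the trivial-stabilizer condition forces $|B_j| \ge 2$; if some $|B_j| = 1$ then $\Delta B_j = \varnothing$ and removing it changes nothing, or one argues it is excluded because then no differences are covered), the pair $(\mathcal{P}, \mathcal{L})$ is a linear space. For the converse, suppose $(\mathcal{P}, \mathcal{L})$ is a linear space. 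Then every pair of distinct points lies in exactly one line, i.e., every ordered pair $(x, y)$, $x \ne y$, lies in exactly one translate $B_j g$; running the bijection of the previous paragraph backwards, each non-identity element $h = x y^{-1}$ of $G$ occurs exactly once in $\bigcup_j \Delta B_j$, so $\mathbf{S}$ is a $(v, K, 1)$-difference family with $K = \{|B_j| \mid 1 \le j \le s\}$.

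The only genuinely delicate point is bookkeeping with \emph{multiplicity versus multiset}: one must be careful that "$h$ occurs $\lambda$ times in $\bigcup_j \Delta B_j$" counts occurrences across all $j$ and all ordered pairs $(a,b)$ within each $B_j$, and that "the line through $x$ and $y$" is counted with the right multiplicity when the same block arises from different $(j, g)$ — but the trivial-stabilizer hypothesis is exactly what rules out a block $B_j g$ being represented twice, and the linear-space axiom "at most one line through two points" matches "$\lambda \le 1$" while the axiom "at least one line" matches "$\lambda \ge 1$." I expect this correspondence to be the main obstacle only in the sense of writing it cleanly; conceptually there is no obstruction, as the statement is essentially the classical dictionary (cf. \cite{beth}) between difference families with $\lambda = 1$ and resolvable-by-translation linear spaces, specialized here to the group acting regularly on itself.
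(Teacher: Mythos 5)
The paper offers no proof of this proposition --- it is quoted from \cite{beth} --- so there is no in-paper argument to compare against; I can only assess your argument on its merits. Your central bijection is correct and is the standard one: for a fixed ordered pair $(x,y)$ of distinct points with $h=xy^{-1}$, the map $(j,(a,b))\mapsto (j,\,g)$ with $g=b^{-1}y=a^{-1}x$ is a bijection between ordered pairs $a\neq b$ in $B_j$ with $ab^{-1}=h$ and pairs $(j,g)$ with $\{x,y\}\subseteq B_jg$, so the multiplicity of $h$ in $\bigcup_j\Delta B_j$ equals the number of such pairs $(j,g)$. The forward direction then goes through as you say (the $|B_j|=1$ degeneracy you mention is harmless and handled adequately).

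The gap is in the converse, at exactly the point you flag and then dismiss. You assert that the trivial-stabilizer hypothesis ``is exactly what rules out a block $B_jg$ being represented twice,'' but it is not. First, there is a left/right mismatch: the hypothesis kills the \emph{left} stabilizer $\{g\mid gB_j=B_j\}$, whereas coincidence of the right translates $B_jg=B_jg'$ is governed by the \emph{right} stabilizer $\{g\mid B_jg=B_j\}$; these agree for abelian $G$ (the only case the paper uses) but not in general. Second, and more seriously, nothing in the hypotheses prevents $B_{j'}$ from being a right translate of $B_j$ for $j\neq j'$; then $\Delta B_{j'}=\Delta B_j$, every covered difference occurs at least twice, yet $\mathcal{L}$ \emph{as a set of subsets} is unchanged and can still be a linear space. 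Concretely, $G=C_3$, $B_1=\{0,1\}$, $B_2=\{1,2\}$ satisfies every stated hypothesis and yields the linear space on three points, but $\lambda=2$. So ``exactly one line through $x,y$'' does not yield ``exactly one pair $(j,g)$ with $\{x,y\}\subseteq B_jg$,'' which is what your converse needs. To close the gap one must either assume the base blocks are pairwise inequivalent under right translation (the implicit convention in \cite{beth}) or read $\mathcal{L}$ as a multiset indexed by $(j,g)$. This is a defect of the statement as transcribed as much as of your proof, and it is harmless in the paper's application, but your stated justification for the step is not valid as written.
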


\begin{obs}
\emph{By Corollary \ref{cor:lem32} and Proposition \ref{prop:prop1}, an association scheme $(X,S)$ such that $\mathbf{O}_\mathrm{\theta}(S) = \Or(S) \cong C_p \times C_p$ and and $\{ n_s \mid s \in S \} = \{1, p \}$ induces a difference family with $\lambda =1$.}
\end{obs}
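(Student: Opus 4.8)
The plan is to read the difference family directly off the linear space $(\mathcal{P},\mathcal{L})$ furnished by Corollary~\ref{cor:lem32} and then quote Proposition~\ref{prop:prop1}. The only genuine work is (a) turning $\mathcal{P}$ into a group and (b) checking the stabilizer hypothesis of Proposition~\ref{prop:prop1}.

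For (a): since $\Or(S)=T$, the factor scheme $S/\!/T=S/\!/\Or(S)$ is a group under the relational product, hence a thin scheme, hence (up to isomorphism) the scheme of the regular action of the finite group $G:=S/\!/T$ on $X/T$, with $|G|=n_{_S}/n_{_T}=:v$. Fixing a base coset identifies $\mathcal{P}=X/T$ with $G$, and the (left) translations of $G$ act as automorphisms of $(X/T,S/\!/T)$. By Theorem~\ref{thm:psp}, $\mathrm{Aut}(X/T,S/\!/T)\le\mathrm{Aut}(\mathcal{P},\mathcal{L})$, so $\mathcal{L}$ is invariant under translation; choosing one representative $B_j$ from each translation-orbit of $\mathcal{L}$ yields a family $\mathbf{S}=\{B_1,\dots,B_s\}$ of subsets of $G$ with $\mathcal{L}=\{\,B_jg\mid g\in G,\ 1\le j\le s\,\}$ and $K:=\{\,|B_j|\mid 1\le j\le s\,\}=\{\,|L|\mid L\in\mathcal{L}\,\}$. (When $v=p$, in particular whenever $(X,S)$ has order $p^3$, the group $G\cong C_p$ is abelian, so there is no left/right ambiguity; in general one works on whichever side matches Proposition~\ref{prop:prop1}.)

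For (b): I must show that no nonidentity element of $G$ fixes a block $B_j$ setwise. The ingredients, from Lemma~\ref{lem:lemsss} and the proof of Theorem~\ref{thm:psp}, are: every line $L_i(M)$ has a well-defined direction $M$ with $\{1_{_X}\}<M<T$; all pairs of points on a line share that direction; and translations preserve directions, since a translation fixes every relation of $S/\!/T$ and $L_{ij}=L(s)$ depends only on the $S/\!/T$-relation joining $i$ and $j$. Moreover, because $\Or(S)=T\cong C_p\times C_p$ is generated as a closed subset by $\bigcup_{s\in S}s^*s$, the subgroups $L_{ij}$ cannot all coincide (otherwise $\bigcup_{s\in S}s^*s$ would lie in a single proper subgroup of $T$); hence, as all pairs on a line share a direction, no line equals $\mathcal{P}$. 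Now if $g\ne 1_{_G}$ satisfied $gB_j=B_j$, then, $G$ acting regularly and hence freely on $\mathcal{P}$, $\langle g\rangle$ would act freely on $B_j$, so $|\langle g\rangle|$ would divide $|B_j|$; when $v$ is prime — in particular for schemes of order $p^3$, where $v=p$ — this forces $\langle g\rangle=G$, and then $G$-invariance of $B_j$ together with transitivity of $G$ on $\mathcal{P}$ gives $B_j=\mathcal{P}$, a contradiction. For composite $v$ one needs a finer argument (for instance to exclude realizations of $AG(2,p)$), exploiting that each point lies on at most $p+1$ lines; I expect this to be the main obstacle.

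Finally, Corollary~\ref{cor:lem32} says $(\mathcal{P},\mathcal{L})$ is a linear space, and with $\mathcal{P}=G$ and $\mathcal{L}=\{\,B_jg\mid g\in G,\ 1\le j\le s\,\}$ as constructed, Proposition~\ref{prop:prop1} immediately yields that $\mathbf{S}$ is a $(v,K,1)$-difference family in $G$; that is, $(X,S)$ induces a difference family with $\lambda=1$. Since $v=n_{_S}/n_{_T}\le p^2+p+1$ by Corollary~\ref{cor:lem32}, this family lives in a group of order at most $p^2+p+1$.
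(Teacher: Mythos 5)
The paper gives no argument for this remark at all, so your attempt is really supplying the missing proof, and your overall strategy is clearly the intended one: identify $\mathcal{P}=X/T$ with the group $G=S/\!/T$ (legitimate, since $T=\Or(S)$ is strongly normal), observe that $\mathcal{L}$ is a union of $G$-orbits of blocks because $\mathrm{Aut}(X/T,S/\!/T)\le\mathrm{Aut}(\mathcal{P},\mathcal{L})$ preserves the directions $L_{ij}$, and then quote Proposition~\ref{prop:prop1}. You are also right that the only real content is the stabilizer hypothesis $|\{g\mid gB_j=B_j\}|=1$, which is an honest hypothesis of Proposition~\ref{prop:prop1} and not automatic: if the linear space were, say, $AG(2,p)$ realized in a group $G\cong C_p\times C_p$ by translates of its order-$p$ subgroups, every block through the identity would be fixed by $p$ translations. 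Your prime-index argument is essentially sound; the one soft spot is the parenthetical ruling out $B_j=\mathcal{P}$. It is repairable: if some line were all of $\mathcal{P}$, then by~(\ref{B}) all directions out of a single point would coincide, and since the classes issuing from one fixed coset already realize every double coset $TsT$ with $s\in S\setminus T$, this forces $L(s)=M$ for every nonthin $s$; the valency hypothesis $n_s=p$ then gives $s^*s=L(s^*)=M$ exactly (count $c_{s^*s}^t=p$ for each $t\in L(s^*)$), whence $\Or(S)=\langle\bigcup_s s^*s\rangle\le M<T$, a contradiction. You assert the conclusion of this computation without doing it.

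The genuine gap is the one you flag yourself: the remark is stated for any scheme with $\mathbf{O}_\mathrm{\theta}(S)=\Or(S)\cong C_p\times C_p$ and valencies $\{1,p\}$, where $v=n_{_S}/n_{_T}$ may be any integer up to $p^2+p+1$, and you only verify the stabilizer hypothesis when $v$ is prime. To see precisely what is needed in general: writing $\Lambda(g)=L(s)$ for the class $s$ with $s^T=g$, a nonidentity $g$ fixes the block $L_1(M)$ if and only if $\Lambda(g)=\Lambda(g^{-1})=M$; so the stabilizer hypothesis holds if and only if $L(s)\ne L(s^*)$ for every $s\in S\setminus T$. When $S/\!/T$ is cyclic of prime order this is exactly what the paper later proves by character-theoretic means (Lemmas~\ref{lem:34} and~\ref{lem:35}: $L(s)=L(s^*)$ forces $G\{\mathbb{C}e_\varphi\}=G$ and hence $\Or(S)<T$), which covers the paper's actual application to $p$-schemes of order $p^3$ and would let you close your prime case by citation. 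For composite $v$ neither you nor the paper offers an argument, so as stated the remark remains unproved in that generality; you should either restrict its hypotheses to prime index or establish $L(s)\ne L(s^*)$ without that assumption.
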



\section{Thin residues as the elementary abelian $p$-group of rank two}\label{sec:main1}

Throughout this section we assume that $(X,S)$ is a scheme such that
\begin{enumerate}
\item $\Or(S)$ is the elementary abelian group of order $p^2$,
\item $S/\!/\Or(S)$ is the cyclic group of order $q$, where $p$ and $q$ are primes.
\end{enumerate}

For convenience, we denote $S/\!/T$ by $G$ and $\Or(S)$ by $T$.

\begin{lem}\label{lem:lem33}
For all $\varphi \in \mathrm{Irr}(T) \setminus \{1_{_T}\}$ and $s \in S \setminus T$, the following are equivalent:
\begin{enumerate}
\item $L(s)= \mathrm{Ker}\varphi$;
\item $e_\varphi \sigma_s \neq 0$;
\item $s^T\in \mathrm{Supp}(\mathbb{C}e_\varphi^S)$, i.e., $\mathbb{C}e_\varphi \otimes \mathbb{C}(TsT) \neq 0$.
\end{enumerate}
\end{lem}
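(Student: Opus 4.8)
The plan is to prove the three statements equivalent by establishing (ii)$\Leftrightarrow$(iii) directly from the definition of the support, and (iii)$\Leftrightarrow$(i) by analysing $\mathbb{C}(TsT)$ as a left $\mathbb{C}T$-module and supplementing this with a valency count that forces $|L(s)|\geq p$ for every $s\in S\setminus T$. Beyond thinness of $T$ and $n_{_T}=p^2$, the only feature of $T\cong C_p\times C_p$ that is really used is that a nontrivial irreducible character of $T$ has kernel of order exactly $p$.

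For (ii)$\Leftrightarrow$(iii): since $T$ is thin, $\sigma_t\sigma_u=\sigma_{tu}$ for $t\in T$, $u\in S$, and every element of $TsT$ has the form $t_1st_2$; hence $\mathbb{C}(TsT)=\mathbb{C}T\,\sigma_s\,\mathbb{C}T$. As $\mathbb{C}e_\varphi=e_\varphi\mathbb{C}T$ is one-dimensional, $e_\varphi\mathbb{C}(TsT)=(e_\varphi\mathbb{C}T)\sigma_s\mathbb{C}T=\mathbb{C}e_\varphi\sigma_s\mathbb{C}T=\mathrm{span}\{e_\varphi\sigma_s\sigma_t\mid t\in T\}$, which is nonzero if and only if $e_\varphi\sigma_s\neq 0$. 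On the other hand the canonical map $\mathbb{C}e_\varphi\otimes_{\mathbb{C}T}\mathbb{C}(TsT)\to e_\varphi\mathbb{C}(TsT)$, $e_\varphi a\otimes x\mapsto e_\varphi ax$, is an isomorphism (the standard identification $eA\otimes_A M\cong eM$ for an idempotent $e$), so $\mathbb{C}e_\varphi\otimes_{\mathbb{C}T}\mathbb{C}(TsT)\neq 0$ if and only if $e_\varphi\sigma_s\neq 0$. This is exactly (ii)$\Leftrightarrow$(iii), and uses no scheme-theoretic input beyond thinness of $T$.

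For (iii)$\Leftrightarrow$(i): view $\mathbb{C}(TsT)$ as a left $\mathbb{C}T$-module. Thinness of $T$ means that left multiplication by $\sigma_t$ merely permutes the basis $\{\sigma_u\mid u\in TsT\}$ along the action $t\cdot u=tu$ of $T$ on the set $TsT$, and every point stabilizer of this action equals $L(s)$ (using $L(u)=L(s)$ for $u\in TsT$ and the commutativity of $T$). Hence $\mathbb{C}(TsT)$ is, as a left $\mathbb{C}T$-module, a direct sum of copies of the permutation module $\mathbb{C}[T/L(s)]=\mathrm{Ind}_{L(s)}^{T}\mathbf{1}$; by Frobenius reciprocity, together with the fact that $T$ is abelian so multiplicities are at most $1$, its $\varphi$-isotypic component $e_\varphi\mathbb{C}(TsT)$ is nonzero if and only if $L(s)\subseteq\mathrm{Ker}\varphi$. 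Thus (iii) holds if and only if $L(s)\subseteq\mathrm{Ker}\varphi$. Since $\varphi\neq 1_{_T}$ forces $|\mathrm{Ker}\varphi|=p$, and since $|L(s)|\geq p$ (established below), $L(s)\subseteq\mathrm{Ker}\varphi$ is equivalent to $L(s)=\mathrm{Ker}\varphi$, which is (i).

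It remains to check $|L(s)|\geq p$ for $s\in S\setminus T$, and this is the one genuinely delicate step, being the place where the hypothesis $\Or(S)\cong C_p\times C_p$ — rather than merely a thin residue of valency $p^2$ — is used essentially, and where one must deduce, not assume, that $\Ot(S)=T$. The argument: $\Or(S)\cong C_p\times C_p$ is nontrivial, so $S$ is not thin (in a thin scheme every $s^*s=\{1_{_X}\}$, whence $\Or(S)=\{1_{_X}\}$); and the only closed subsets of $S$ containing $T$ are $T$ and $S$, because $S/\!/T\cong C_q$ has prime order. Hence $\Ot(S)=\Or(S)=T$ and so $n_s>1$ for $s\notin T$. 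Moreover $n_u=n_s$ for every $u\in TsT$ and $\sum_{u\in TsT}n_u=n_{_T}\,n_{s^T}=p^2$ since $S/\!/T$ is thin, so $|TsT|\,n_s=p^2$ and $n_s\in\{p,p^2\}$; since $Ts\subseteq TsT$ with $|Ts|=[T:L(s)]=p^2/|L(s)|$, we get $p^2/|L(s)|\leq |TsT|=p^2/n_s$, i.e. $|L(s)|\geq n_s\geq p$. Combining the three steps yields the equivalence of (i), (ii), (iii).
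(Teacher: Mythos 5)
Your proof is correct, but it takes a genuinely different route from the paper's. The paper proves (i)$\Leftrightarrow$(ii) by a direct computation with the factorization $e_\varphi=\bigl(\frac1p\sum_i\varepsilon^i\sigma_t^i\bigr)\bigl(\frac1p\sum_{t'\in\mathrm{Ker}\varphi}\sigma_{t'}\bigr)$, using that $\langle t\rangle$ and $\mathrm{Ker}\varphi$ act regularly on $TsT$; and it proves (ii)$\Leftrightarrow$(iii) by invoking the Clifford-theoretic fact (Theorem~\ref{thm:irreducible2}) that $\dim\mathbb{C}e_\varphi\otimes\mathbb{C}(TsT)\le1$ together with $e_\varphi\otimes\sigma_s=e_\varphi\otimes e_\varphi\sigma_s$. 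You instead get (ii)$\Leftrightarrow$(iii) from the elementary identification $e_\varphi\mathbb{C}T\otimes_{\mathbb{C}T}M\cong e_\varphi M$, avoiding Theorem~\ref{thm:irreducible2} entirely, and you get (iii)$\Leftrightarrow$(i) by recognizing $\mathbb{C}(TsT)$ as a sum of copies of the permutation module $\mathrm{Ind}_{L(s)}^{T}\mathbf{1}$ and applying Frobenius reciprocity, which reduces (iii) to the containment $L(s)\subseteq\mathrm{Ker}\varphi$. The extra step $|L(s)|\ge p$ that you then supply is precisely the point the paper leaves implicit: its assertion that $\langle t\rangle$ and $\mathrm{Ker}\varphi$ act regularly on $TsT$ silently presupposes $|TsT|=p$, i.e.\ $n_s=p$, and the equivalence would actually fail if some $s\in S\setminus T$ had $n_s=1$ (then $e_\varphi\sigma_s\ne0$ for every $\varphi$ while $L(s)=\{1_{_X}\}$). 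Your derivation of $\mathbf{O}_{\theta}(S)=T$ from the prime index of $T$ and the nontriviality of the thin residue, followed by the count $|L(s)|\ge n_s\ge p$, closes that gap cleanly. In short, both arguments are valid; yours is more module-theoretic and self-contained and makes the use of the standing hypotheses explicit, while the paper's is a more concrete matrix computation.
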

\begin{proof}

Note that $T = \langle t \rangle \mathrm{Ker}\varphi$ for some $t \in T \setminus \mathrm{Ker}\varphi$ and $e_\varphi = (\frac{1}{p} \sum_{i \in \mathbb{Z}_p} \varepsilon^i \sigma_t^i)(\frac{1}{p} \sum_{t' \in \mathrm{Ker}\varphi} \sigma_{t'})$ for some primitive $p$-th root of unity $\varepsilon$.

First, we show that $L(s)= \mathrm{Ker}\varphi$ if and only if $e_\varphi \sigma_s \neq 0$.
Assume $L(s)= \mathrm{Ker}\varphi$.
Since $\langle t \rangle$ acts regularly on $TsT$,
\[ e_\varphi \sigma_s = (\frac{1}{p} \sum_{i \in \mathbb{Z}_p} \varepsilon^i \sigma_t^i) \sigma_s = (\frac{1}{p} \sum_{i \in \mathbb{Z}_p} \varepsilon^i \sigma_{t^is}) \neq 0. \]

Assume $e_\varphi \sigma_s \neq 0$. If $L(s) \neq \mathrm{Ker}\varphi$, then since $\langle t \rangle$ and $\mathrm{Ker}\varphi$ act regularly on $TsT$,
\[e_\varphi \sigma_s = (\frac{1}{p} \sum_{i \in \mathbb{Z}_p} \varepsilon^i \sigma_t^i)(\frac{1}{p} \sum_{t' \in \mathrm{Ker}\varphi} \sigma_{t'})\sigma_s = (\frac{1}{p} \sum_{i \in \mathbb{Z}_p} \varepsilon^i \sigma_t^i)(\frac{1}{p} \sum_{s' \in TsT} \sigma_{s'})= (\frac{1}{p^2} \sum_{i \in \mathbb{Z}_p} \varepsilon^i )(\sum_{s' \in TsT} \sigma_{s'}) =0, \]
a contradiction.

Next, we show that $\mathbb{C}e_\varphi \otimes \mathbb{C}(TsT) = 0$ if and only if $e_\varphi \sigma_s = 0$.
By Theorem \ref{thm:irreducible2},
the dimension of $\mathbb{C}e_\varphi \otimes \mathbb{C}(TsT)$ is at most $1$. Thus, $\mathbb{C}e_\varphi \otimes \mathbb{C}(TsT)= 0$ if and only if
$e_\varphi \otimes \sigma_s = 0$.

Since $e_\varphi \otimes \sigma_s = e_\varphi \otimes e_\varphi \sigma_s$ by definition of tensor product,
$e_\varphi \otimes \sigma_s = 0$ if and only if $e_\varphi \sigma_s = 0$. This completes the proof.
\end{proof}

\begin{lem}\label{lem:34}
For all $\varphi \in \mathrm{Irr}(T) \setminus \{1_{_T}\}$ and $s \in S \setminus T$,
$s^T\in G\{\mathbb{C}e_\varphi\}$ if and only if $\mathrm{Ker} \varphi=L(s)=L(s^\ast)$.
\end{lem}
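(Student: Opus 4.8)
The plan is to unwind $s^{T}\in G\{\mathbb{C}e_\varphi\}$ into the statement $\mathbb{C}e_\varphi\otimes_{\mathbb{C}T}\mathbb{C}(TsT)\cong\mathbb{C}e_\varphi$ (as $\mathbb{C}T$-modules) and then analyse this induced piece. Write $K:=\mathrm{Ker}\varphi$, a subgroup of $T$ of index $p$. By Lemma~\ref{lem:lem33}, $\mathbb{C}e_\varphi\otimes\mathbb{C}(TsT)\neq 0$ if and only if $L(s)=K$; so if $L(s)\neq K$ both sides of the claimed equivalence fail and there is nothing to prove, and I would assume $L(s)=K$. By Theorem~\ref{thm:irreducible2} the module $M:=\mathbb{C}e_\varphi\otimes\mathbb{C}(TsT)$ is then $1$-dimensional, hence affords a linear character $\psi\in\mathrm{Irr}(T)$, and the assertion becomes $\psi=\varphi\iff L(s^{\ast})=K$. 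Exactly as in the proof of Lemma~\ref{lem:lem33} I would realise $M$ as the line $\mathbb{C}(e_\varphi\sigma_s)\subseteq\mathbb{C}S$ on which $\sigma_{t'}$ $(t'\in T)$ acts from the right by $e_\varphi\sigma_s\mapsto e_\varphi\sigma_s\sigma_{t'}=e_\varphi\sigma_{st'}$, so that $\psi$ is pinned down by $e_\varphi\sigma_{st'}=\psi(\sigma_{t'})\,e_\varphi\sigma_s$ for all $t'\in T$.

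For the direction $\psi=\varphi\Rightarrow L(s^{\ast})=K$ I would argue the contrapositive. A short valency count (using $Ts\subseteq TsT$ with $|Ts|=n_{_T}/|L(s)|$ and $|TsT|=n_{_T}/n_s$, and the analogous facts for $s^{\ast}$) shows that $L(s)=K$ forces $n_s=p$ and that $L(s^{\ast})$ is then a nontrivial subgroup of $T$. If $L(s^{\ast})\neq K$, then, being nontrivial and distinct from $K$, $L(s^{\ast})$ is not contained in $K$, so I may pick $u_0\in L(s^{\ast})\setminus K$. Since $T$ is abelian, $L(s^{\ast})$ is precisely the right stabiliser $\{u\in T\mid su=s\}$, so $\sigma_s\sigma_{u_0}=\sigma_s$, whence $\psi(\sigma_{u_0})=1$; but $u_0\notin K=\mathrm{Ker}\varphi$, so $\varphi(\sigma_{u_0})\neq 1$. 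Therefore $\psi\neq\varphi$, i.e.\ $s^{T}\notin G\{\mathbb{C}e_\varphi\}$.

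For the converse I would assume $K=L(s)=L(s^{\ast})$, so that $n_s=p$ and, again by counting valencies, $TsT=Ts=sT$, each of size $p$. Fix $t\in T\setminus K$ and choose $\varepsilon$ as in Lemma~\ref{lem:lem33}, so $\varphi(\sigma_t)=\varepsilon^{-1}$ and $e_\varphi\sigma_s=\tfrac1p\sum_{i\in\mathbb{Z}_p}\varepsilon^i\sigma_{t^is}$. For $t'\in T$ write $t'\equiv t^{b}\pmod K$; then $t's=t^{b}s$ (because $K=L(s)$), while $st'\in Ts$ yields $st'=t^{a}s$ for some $a\in\mathbb{Z}_p$. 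Reindexing the sum gives $e_\varphi\sigma_{st'}=\varepsilon^{-a}e_\varphi\sigma_s$, so $\psi(\sigma_{t'})=\varepsilon^{-a}$, whereas $\varphi(\sigma_{t'})=\varepsilon^{-b}$. Hence $\psi=\varphi$ holds exactly when $a=b$ for every $t'$, that is, when $st'=t's$ for all $t'\in T$. Proving this identity is, I expect, the crux of the whole argument. I would deduce it by passing to $S/\!/K$: there $s^{K}$ is a thin element, $T/K=\Or(S/\!/K)$ is a normal subgroup of $\Ot(S/\!/K)$ of order $p$, and $\Ot(S/\!/K)/(T/K)$ embeds into $(S/\!/K)/\!/(T/K)\cong G$, which is cyclic of prime order; the map $t^{b}\mapsto t^{a}$ is exactly the automorphism of $T/K$ given by conjugation by $s^{K}$, and it is trivial — for instance, when the relevant prime equals $p$ (the case of $p$-schemes of order $p^3$) the group $\Ot(S/\!/K)$ has order $p^{2}$ and is therefore abelian. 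Consequently $a=b$, so $\psi=\varphi$ and $s^{T}\in G\{\mathbb{C}e_\varphi\}$.
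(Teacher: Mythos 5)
Your reduction of the lemma to the claim ``$\psi=\varphi$ if and only if $L(s^{\ast})=K$'', where $\psi$ is the linear character afforded by the one-dimensional module $\mathbb{C}e_\varphi\otimes\mathbb{C}(TsT)$, is correct, and your forward direction (if $L(s^{\ast})\neq K$, pick $u_0\in L(s^{\ast})\setminus K$ and compare $\psi(\sigma_{u_0})=1$ with $\varphi(\sigma_{u_0})\neq 1$) is sound. This is a genuinely different route from the paper's: the paper never computes $\psi$, but instead uses that $e_\varphi$ is self-adjoint and $(\sigma_s)^{\ast}=\sigma_{s^{\ast}}$ to pass between $e_\varphi\sigma_s\neq 0$, $\sigma_se_\varphi\neq 0$ and $e_\varphi\sigma_{s^{\ast}}\neq 0$, and then applies Lemma~\ref{lem:lem33} twice. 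Your version makes explicit the point that the paper's converse glosses over: knowing $e_\varphi\sigma_s\neq 0$ and $\sigma_se_\varphi\neq 0$ only pins down $\mathrm{Ker}\,\psi=L(s^{\ast})$, not $\psi$ itself, and one must still exclude the $p-2$ other nontrivial characters with the same kernel, i.e.\ one must prove $st'=t's$. Identifying this as the crux is a genuine improvement in precision.

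However, your resolution of that crux is incomplete for the lemma as stated. Your quotient argument shows only that the conjugation automorphism of $T/K\cong C_p$ induced by $s^{K}$ has order dividing $\gcd(q,p-1)$, and you conclude it is trivial only ``when the relevant prime equals $p$''. Section~3 assumes merely that $p$ and $q$ are primes, and Lemmas~\ref{lem:35}, \ref{lem:36} and Theorem~\ref{thm:main3} invoke Lemma~\ref{lem:34} in that generality; when $q\mid p-1$ your argument does not close. Fortunately your own setup finishes the job by a different mechanism: since $L(s)=L(s^{\ast})=K$ gives $KsK=\{s\}$ and $n_{s^{K}}=1$, the thin radical $\Ot(S/\!/K)$ contains both $T/\!/K$ (of order $p$) and $s^{K}$, and its image in $(S/\!/K)/\!/(T/\!/K)\cong S/\!/T\cong C_q$ is all of $C_q$; hence it has order $pq=n_{_S}/n_{_K}=n_{_{S/\!/K}}$, so $S/\!/K$ is thin, $K\lhd^\sharp S$, and $\Or(S)\subseteq K$, contradicting $\Or(S)=T$. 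Thus the hypothesis $\mathrm{Ker}\,\varphi=L(s)=L(s^{\ast})$ can never occur and the converse holds vacuously for every $q$. (This also corrects your assertion that $T/K=\Or(S/\!/K)$: in the situation you are analysing, $\Or(S/\!/K)$ would in fact be trivial, which is exactly the contradiction.) You should add this order count, or some equivalent device, so that the conclusion does not depend on $q=p$.
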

\begin{proof}
Suppose that $1_{_X}^T\ne s^T\in G\{\mathbb{C}e_\varphi\}$.
This implies that $e_\varphi \sigma_s$ is left left invariant by the right multiplication of $e_\varphi$,
i.e.,
\[(e_\varphi \sigma_s)e_\varphi=e_\varphi \sigma_s.\]
Since $e_\varphi \sigma_s$ is nonzero, so $\sigma_s e_\varphi$ is.
Taking the adjoint operator on $\sigma_s e_\varphi\ne 0$ we obtain that
$(e_\varphi)^\ast(\sigma_{s})^\ast \ne 0$.
Note that $(\sigma_s)^\ast=\sigma_{s^\ast}$ and $(e_\varphi)^\ast=e_\varphi$
since $e_\varphi^\ast$ is a primitive idempotent of $\mathbb{C}T$ with $e_\varphi (e_\varphi)^\ast\ne 0$.
Thus, we have $ e_\varphi\sigma_{s^\ast} \ne 0$.
Applying Lemma~\ref{lem:lem33} for both $e_\varphi \sigma_s\ne 0$ and $e_\varphi \sigma_{s^\ast}  \ne 0$ we
obtain that $\mathrm{Ker} \varphi=L(s)=L(s^\ast)$.

Suppose $\mathrm{Ker} \varphi=L(s)=L(s^\ast)$.
By Lemma~\ref{lem:lem33} we have
\[\mbox{$e_\varphi \sigma_s\ne 0$ and $e_\varphi \sigma_{s^\ast}\ne 0$.}\]
This implies that $\sigma_s e_\varphi\ne 0$ since $(\sigma_s)^\ast=\sigma_{s^\ast}$ and $(e_\varphi)^\ast=e_\varphi$.
Therefore, the right multiplication of $e_\varphi$ to $\mathbb{C}e_\varphi \otimes \mathbb{C}(TsT)$
acts identically on it. It follows that $\mathbb{C}e_\varphi \otimes \mathbb{C}(TsT)\simeq \mathbb{C}e_\varphi$
as right $\mathbb{C}T$-modules, and hence, $s^T\in G\{\mathbb{C}e_\varphi\}$.
\end{proof}

\begin{lem}\label{lem:35}
The following are equivalent:
\begin{enumerate}
\item $(X,S)$ is commutative;
\item For each $\varphi\in \mathrm{Irr}(T)\setminus \{1_{_T}\}$, $\mathrm{Supp}(\mathbb{C}e_\varphi^S)=\{1_{_X}^T\}$;
\item For each $s\in S\setminus T$ we have $|TsT|=1$.
\end{enumerate}
\end{lem}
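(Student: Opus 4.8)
The plan is to prove the cycle of implications $(3)\Rightarrow(1)\Rightarrow(2)\Rightarrow(3)$. The first implication is a direct computation in the adjacency algebra, the last is bookkeeping with left stabilizers, and $(1)\Rightarrow(2)$ carries the content.

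For $(3)\Rightarrow(1)$: if $|TsT|=1$ for every $s\in S\setminus T$, then $Ts=sT=\{s\}$, so $\sigma_t\sigma_s=\sigma_s=\sigma_s\sigma_t$ for all $t\in T$ and $s\in S\setminus T$, and, with $\sigma_T:=\sum_{t\in T}\sigma_t$, also $\sigma_T\sigma_s=p^2\sigma_s=\sigma_s\sigma_T$ and $\sigma_T\sigma_t=\sigma_T=\sigma_t\sigma_T$. I would then check $\sigma_r\sigma_s=\sigma_s\sigma_r$ for all $r,s\in S$ using the $G$-grading of $\mathbb{C}S$: if $r^Ts^T\neq 1_{_G}$ the graded component containing both $\sigma_r\sigma_s$ and $\sigma_s\sigma_r$ equals $\mathbb{C}\sigma_u$ for a single $u$, so the two products are scalars times $\sigma_u$ with common row sum $n_rn_s$ and hence equal; if $r^Ts^T=1_{_G}$ with $r,s\notin T$, then $\sigma_r\sigma_s\in\mathbb{C}T$ and left multiplication by $\sigma_T$ forces $\sigma_r\sigma_s=(n_rn_s/p^2)\sigma_T$, which is symmetric in $r$ and $s$; the cases with $r\in T$ or $s\in T$ are immediate from $T$ being abelian and $\sigma_t\sigma_s=\sigma_s=\sigma_s\sigma_t$.

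For $(1)\Rightarrow(2)$, fix $\varphi\in\mathrm{Irr}(T)\setminus\{1_{_T}\}$ and apply Theorem~\ref{thm:key}, available because $G=S/\!/T$ has prime order $q$. If alternative (2) of that theorem holds, then $\chi:=\varphi^S\in\mathrm{Irr}(S)$ and $\chi(\sigma_{1_{_X}})=|\mathrm{Supp}(\varphi^S)|$ by Remark~\ref{remark:2}; commutativity forces $\chi(\sigma_{1_{_X}})=1$, so $\mathrm{Supp}(\mathbb{C}e_\varphi^S)=\{1_{_X}^T\}$, as wanted. It then remains to rule out alternative (1), i.e. a $\chi\in\mathrm{Irr}(S)$ with $\chi_{_T}=\varphi$. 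If such $\chi$ exists, then $\varphi^S=\sum_{i=1}^q\chi\xi_i$; since $T$ is thin its thin residue is trivial, so Theorem~\ref{thm:residue} applied to $T$ gives $m_\varphi=\varphi(\sigma_{1_{_X}})=1$, and Theorem~\ref{thm:key1} yields $q=q\,m_\varphi=\sum_{i=1}^q m_{\chi\xi_i}$, which forces $m_{\chi\xi_i}=1$ for every $i$ and in particular $m_\chi=1=\chi(\sigma_{1_{_X}})$. By Theorem~\ref{thm:residue} (now for $S$), $\chi\in\mathrm{Irr}(S/\!/\Or(S))$, so $\chi$ is trivial on $\Or(S)=T$, contradicting $\chi_{_T}=\varphi\neq 1_{_T}$. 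I expect this exclusion — a nontrivial linear character of $T$ cannot extend to $S$, since any such extension would be detected by the thin residue and hence be trivial on $T$ — to be the only real obstacle in the proof.

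For $(2)\Rightarrow(3)$: by Lemma~\ref{lem:lem33}, statement (2) says that $L(s)\neq\mathrm{Ker}\varphi$ for every $\varphi\in\mathrm{Irr}(T)\setminus\{1_{_T}\}$ and every $s\in S\setminus T$. Since every nontrivial irreducible character of $C_p\times C_p$ has kernel of order $p$ and every subgroup of order $p$ occurs as such a kernel, this is equivalent to $|L(s)|\neq p$, hence to $L(s)\in\{\{1_{_X}\},T\}$ for all $s\in S\setminus T$. I would exclude $L(s)=\{1_{_X}\}$ by comparing row sums in $\sigma_T\sigma_s=|L(s)|\sum_{s'\in Ts}\sigma_{s'}$, which gives $\sum_{s'\in Ts}n_{s'}=p^2n_s/|L(s)|$; since $Ts\subseteq TsT$ and $\sum_{s'\in TsT}n_{s'}=p^2$ (the relations of $TsT$ together join every point of one coset of $T$ in $X$ to every point of another, and such cosets have $p^2$ points), we obtain $|L(s)|\ge n_s\ge p$, using $n_s\in\{p,p^2\}$ for $s\in S\setminus T$ as in the proof of Lemma~\ref{lem:lemsss}. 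Hence $L(s)=T$ for all $s\in S\setminus T$; applying this also to $s^*\in S\setminus T$ gives $Ts=sT=\{s\}$, so $TsT=\{s\}$, which is (3).
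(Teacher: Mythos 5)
Your cycle $(3)\Rightarrow(1)\Rightarrow(2)\Rightarrow(3)$ is correct, and two of the three legs essentially mirror the paper: $(3)\Rightarrow(1)$ is the paper's argument with the graded-component bookkeeping spelled out, and $(2)\Rightarrow(3)$ rests, as in the paper, on Lemma~\ref{lem:lem33} plus the exclusion of thin elements outside $T$ (your valency count $|L(s)|\ge n_s$ obtained from $\sum_{s'\in TsT}n_{s'}=p^2$ is a clean way to package what the paper does in one line). The genuinely different leg is $(1)\Rightarrow(2)$: the paper argues combinatorially via Lemma~\ref{lem:34}, showing that commutativity forces $L(s)=L(s^*)$, hence $G\{\mathbb{C}e_\varphi\}=G$ by primality of $|G|$, and then derives a contradiction from $\Or(S)=\langle uu^*\mid u\in S\rangle$; you instead run pure character theory, using Theorem~\ref{thm:key} to split into the two Clifford alternatives, disposing of alternative (2) by linearity of all irreducible characters of a commutative algebra, and killing alternative (1) with Theorem~\ref{thm:key1} and Theorem~\ref{thm:residue} (a linear $\chi$ with $m_\chi=\chi(\sigma_{1_X})=1$ lies in $\mathrm{Irr}(S/\!/\Or(S))$, hence $\chi(e_{1_T})=1$ and $\chi_{_T}=1_{_T}$). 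Your route buys independence from Lemma~\ref{lem:34} and from the identity $L(u)=uu^*$, at the cost of invoking Frobenius reciprocity to pass from ``$\chi$ is a constituent of $\varphi^S$'' to ``$\varphi$ is a constituent of $\chi_{_T}$'', and the fact that linear characters in the image of $\mathrm{Irr}(S/\!/\Or(S))$ restrict to $1_{_T}$ on $T$ — both standard in Hanaki's framework but worth a citation. Two small points to tighten: (a) in $(2)\Rightarrow(3)$ you cite $n_s\in\{p,p^2\}$ ``as in Lemma~\ref{lem:lemsss}'', but that lemma assumes $\mathbf{O}_\mathrm{\theta}(S)=\Or(S)$, which is not a standing hypothesis of Section~3; you should note (as the paper does inline) that a thin element of $S\setminus T$ would force $S$ to be thin and $\Or(S)$ trivial, and in fact $n_s\ge 2$ already suffices for your exclusion of $L(s)=\{1_{_X}\}$; (b) in applying Theorem~\ref{thm:key1} you should remark that the $q$ characters $\chi\xi_i$ each have multiplicity at least $1$, so the equality $q=\sum_i m_{\chi\xi_i}$ pins them all to $1$.
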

\begin{proof}
Suppose (i) and $\mathrm{Sup}p(\mathbb{C}e_\varphi^S)\ne \{1_{_X}^T\}$ for some $\varphi\in \mathrm{Irr}(T)\setminus \{1_{_T}\}$.
Then, by definition, $s^T\in \mathrm{Supp}(\mathbb{C}e_\varphi^S)$ for some $s\in S\setminus T$.
Since $(X,S)$ is commutative, we have
$ss^\ast=s^\ast s$, or equivalently, $L(s)=L(s^\ast)$.
It follows from Lemma~\ref{lem:34} that
$s^T\in G\{\mathbb{C}e_\varphi\}$.
Note that $G(\mathbb{C}e_\varphi\}$ is a subgroup of $G$ and $G$ is a cyclic group of prime order, it follows that
$G\{\mathbb{C}e_\varphi\}=G$.
By Lemma~\ref{lem:34}, $\mathrm{Ker} \varphi=L(u)=L(u^\ast)$ for each $u\in S\setminus T$.
Since $L(u)=uu^\ast$ for each $u\in S$, it follows that
\[\mathrm{Ker} \varphi<T=\Or(S)=\gn{uu^\ast\mid u\in S}=\mathrm{Ker} \varphi,\] a contradiction.

Suppose (ii) and $|TsT|\ne 1$ for some $s\in S\setminus T$.
Then $|TsT|>1$ and $ss^\ast$ is a closed subset of $T$ with valency $p$,
otherwise, $n_s=1$, implying that $S$ is thin and $n_{_T}=1$, a contradiction.
Take $\psi \in \mathrm{Irr}(T)\setminus \{1_{_T}\}$ such that $\mathrm{Ker} \psi=L(s)$.
Then, by Lemma~\ref{lem:lem33}, $s^T\in \mathrm{Supp}(\mathbb{C}e_\psi^S)$, which contradicts (ii).

Suppose (iii). Let $s,u\in S\setminus T$.
Since $S\Qd T$ is abelian, $T(su)T=T(us)T$.
If $us\subseteq T$, then $\sigma_s\sigma_u=p^2\sigma_{_T}=\sigma_u\sigma_s$.
If not, then $us$ and $su$ are singletons by (iii), and hence $\sigma_s\sigma_u=\sigma_u\sigma_s$.
Since $T$ acts trivially on $S\setminus T$ by the right and left multiplication respectively,
it follows that $(X,S)$ is commutative.
\end{proof}

\begin{lem}\label{lem:36}
For each $\varphi\in \mathrm{Irr}(S)\setminus \{1_{_T}\}$,
if $\mathrm{Supp}(\mathbb{C}e_\varphi^S)\ne \{1_{_X}^T\}$, then $\varphi^S\in \mathrm{Irr}(S)$.
\end{lem}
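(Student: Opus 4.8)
The plan is to pick a single irreducible constituent $\chi$ of $\varphi^{S}$ and feed it into Theorem~\ref{thm:key}, whose two alternatives are (1) $\chi_{_T}\in\mathrm{Irr}(T)$ with $(\chi_{_T})^{S}=\sum_{i}\chi\xi_{i}$, and (2) $\chi$ vanishing off $T$ with $\psi^{S}=\chi$ for every irreducible constituent $\psi$ of $\chi_{_T}$. Since $\varphi$ will turn out to be a constituent of $\chi_{_T}$, alternative (2) immediately gives $\varphi^{S}=\chi\in\mathrm{Irr}(S)$, so the whole content is to rule out alternative (1). Before starting I would unwind the hypothesis via Lemma~\ref{lem:lem33}: $\mathrm{Supp}(\mathbb{C}e_{\varphi}^{S})\neq\{1_{_X}^{T}\}$ means precisely that $L(s_{0})=\mathrm{Ker}\,\varphi$ for some $s_{0}\in S\setminus T$, so $e_{\varphi}$ does interact with $S\setminus T$ at all. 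I would also record, once and for all, that because $T$ is thin, $\mathbb{C}T$ is the commutative group algebra of $C_{p}\times C_{p}$ and $\mathbb{C}e_{\varphi}$ is the irreducible $\mathbb{C}T$-module affording $\varphi$ (or an algebraic conjugate, with the same kernel); hence $G\{\mathbb{C}e_{\varphi}\}=G\{\varphi\}$, which is what lets me pass between the language of Lemma~\ref{lem:34} and that of Theorem~\ref{thm:key}/Remark~\ref{remark:2}.

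Here is how I would carry it out. Let $\chi\in\mathrm{Irr}(S)$ occur in $\varphi^{S}$; by Frobenius reciprocity $\varphi$ is a constituent of $\chi_{_T}$. If $\chi$ satisfies Theorem~\ref{thm:key}(2), then taking $\psi=\varphi$ gives $\varphi^{S}=\chi\in\mathrm{Irr}(S)$ and we are done. So suppose $\chi$ satisfies Theorem~\ref{thm:key}(1). Then $\chi_{_T}\in\mathrm{Irr}(T)$, and since $\varphi$ is a constituent of this irreducible character we get $\chi_{_T}=\varphi$; by Remark~\ref{remark:2}, $G\{\varphi\}=G\{\chi_{_T}\}=G$, that is, $G\{\mathbb{C}e_{\varphi}\}=G$. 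Now Lemma~\ref{lem:34} applies to every $u\in S\setminus T$ (every coset of $T$ other than $1_{_X}^{T}$ has a representative in $S\setminus T$) and yields $\mathrm{Ker}\,\varphi=L(u)=L(u^{\ast})$ for all such $u$. Using the identity $L(u)=uu^{\ast}$ for $u\in S$ (so $uu^{\ast}=\{1_{_X}\}$ for $u\in T$ and $uu^{\ast}=\mathrm{Ker}\,\varphi$ for $u\in S\setminus T$) together with $\Or(S)=\langle uu^{\ast}\mid u\in S\rangle$, exactly as in the proof of Lemma~\ref{lem:35}, I would conclude
\[ T=\Or(S)=\langle uu^{\ast}\mid u\in S\rangle=\mathrm{Ker}\,\varphi, \]
contradicting $\varphi\neq 1_{_T}$. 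Hence alternative (1) is impossible, $\chi$ satisfies Theorem~\ref{thm:key}(2), and $\varphi^{S}=\chi\in\mathrm{Irr}(S)$.

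The step I expect to need the most care is the bookkeeping linking the two stabilizer notations --- verifying $G\{\mathbb{C}e_{\varphi}\}=G\{\varphi\}$ and that ``$G\{\mathbb{C}e_{\varphi}\}=G$'' really does license applying Lemma~\ref{lem:34} to \emph{all} of $S\setminus T$ (equivalently, that $S\setminus T$ meets every non-identity coset of $T$ in $S$). Once this is clean the rest is a short reprise of the proof of Lemma~\ref{lem:35}: the only way $\varphi^{S}$ could fail to be irreducible is that some constituent is of type~(1), and that would force $\mathrm{Ker}\,\varphi$ to coincide with the thin residue $T$, which is absurd. The hypothesis $\mathrm{Supp}(\mathbb{C}e_{\varphi}^{S})\neq\{1_{_X}^{T}\}$ enters only through the concrete witness $s_{0}$ above, which guarantees we are not in the degenerate situation where $e_{\varphi}$ annihilates every $\sigma_{s}$ with $s\in S\setminus T$.
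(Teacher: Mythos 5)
Your proof is correct and follows essentially the same route as the paper: reduce to Theorem~\ref{thm:key} via a constituent $\chi$ of $\varphi^{S}$ and kill alternative (1) by showing it would force $\mathrm{Ker}\,\varphi=\langle uu^{\ast}\mid u\in S\rangle=\Or(S)=T$. The only (harmless) difference is that you rule out case (1) through the stabilizer $G\{\varphi\}=G$ via Remark~\ref{remark:2} and Lemma~\ref{lem:34}, whereas the paper first proves $\mathrm{Supp}(\mathbb{C}e_\varphi^S)\neq G$ with Lemma~\ref{lem:lem33} and then uses the degree count $\varphi^{S}(\sigma_{1_X})<q$ --- both land on the identical contradiction.
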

\begin{proof}
Suppose $1_{_X}^T\ne s^T\in \mathrm{Supp}(\mathbb{C}e_\varphi^S)$.
Then we conclude from Lemma~\ref{lem:35} that
$(X,S)$ is not commutative, and from Lemma~\ref{lem:lem33} that $L(s)=\mathrm{Ker} \varphi$.
Note that $\varphi^S(\sigma_{1_X})= \mathrm{dim} \mathbb{C}e_\varphi \otimes \mathbb{C}S$,
\begin{equation}\label{eq35}
\mathbb{C}e_\varphi \otimes \mathbb{C}S=\bigoplus_{s^T\in S\Qd T} \mathbb{C}e_\varphi \otimes \mathbb{C}(TsT)
\mbox{ and }
\mathrm{dim}\mathbb{C}e_\varphi \otimes \mathbb{C}(TsT)\in \{0,1\}.
\end{equation}

We claim that $\mathrm{Supp}(\mathbb{C}e_\varphi^S)\ne G$.
Otherwise, $L(u)=\mathrm{Ker} \varphi$ for each $u\in S\setminus T$ by Lemma~\ref{lem:lem33},
which implies that $L(s)<T=\Or(S)=\gn{uu^\ast\mid u\in S}=L(s)$, a contradiction.

Let $\chi\in \mathrm{Irr}(S)$ be an irreducible constituent of $\varphi^S$.
Then $\varphi$ is an irreducible constituent of $\chi_{_T}$.
So (1) or (2) of Theorem \ref{thm:key} holds.
But, (1) never holds, since
$\varphi^S(\sigma_{1_X})<p$ by the claim and (\ref{eq35}).
This implies that $\varphi^S\in \mathrm{Irr}(S)$.
\end{proof}

\begin{thm}\label{thm:main3}
Suppose that $(X,S)$ is an association scheme which satisfies the following:
\begin{enumerate}
\item $T :=\Or(S)$ is the elementary abelian group of order $p^2$;
\item $S/\!/T$ is the cyclic group of order $q$;
\end{enumerate}
Then $\{ \chi(\sigma_{1_X}) \mid \chi \in \mathrm{Irr}(S), \chi(\sigma_{1_X})\neq 1 \} = \{ |l| \mid l \in \mathcal{L} \}$, where $\mathcal{L}$ is the set of lines on $X/T$ as in Section~2.
\end{thm}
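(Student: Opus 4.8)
The plan is to establish a bijection between the set of non-principal irreducible character degrees of $S$ and the set of line sizes of $(\mathcal{P},\mathcal{L})$ by running the representation-theoretic machinery of Section~2 through the structural dictionary provided by Lemmas \ref{lem:lem33}--\ref{lem:36}. First I would recall that since $G = S/\!/T$ is cyclic of prime order $q$, every $\chi \in \mathrm{Irr}(S)$ falls into one of the two cases of Theorem \ref{thm:key}. If $\chi_{_T} \in \mathrm{Irr}(T)$ then $\chi(\sigma_{1_X}) = \chi_{_T}(\sigma_{1_X}) = 1$ because $T$ is abelian, so such $\chi$ contribute nothing to the left-hand set; hence every $\chi$ with $\chi(\sigma_{1_X}) \neq 1$ is of type (2), and by Theorem \ref{thm:key}(2) together with Remark \ref{remark:2} we have $\chi = \varphi^S$ for some $\varphi \in \mathrm{Irr}(T)$ with $\chi(\sigma_{1_X}) = |\mathrm{Supp}(\varphi^S)| = |\mathrm{Supp}(\mathbb{C}e_\varphi^S)|$. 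Since $\chi(\sigma_{1_X}) \neq 1$, this support is not $\{1_{_X}^T\}$, and $\varphi \neq 1_{_T}$ (else $\varphi^S$ would be a character of the group $G$ of degree $1$). By Lemma \ref{lem:36}, $\varphi^S$ is irreducible, which makes the correspondence $\varphi \mapsto \varphi^S$ well-behaved.

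Next I would translate $|\mathrm{Supp}(\mathbb{C}e_\varphi^S)|$ into a line size. By Lemma \ref{lem:lem33}, $s^T \in \mathrm{Supp}(\mathbb{C}e_\varphi^S)$ if and only if $L(s) = \mathrm{Ker}\,\varphi$, so
\[
\chi(\sigma_{1_X}) = |\{ s^T \in S/\!/T \mid L(s) = \mathrm{Ker}\,\varphi \}| .
\]
Now fix a coset index $i$ with $s \cap (X_i \times X_j) \neq \varnothing$ and recall $L_{ij}(s) = L(s)$; since $1_{_X} \in X_i \times X_i$ always gives $L_{ii} = T$, and since $L(s) = \mathrm{Ker}\,\varphi$ is a proper nontrivial subgroup $M$ of $T$ (it is proper because the support is not all of $G$, via the argument $\mathrm{Ker}\,\varphi < T = \langle uu^* \mid u \in S\rangle$ used in Lemma \ref{lem:36}; it is nontrivial because $\varphi \neq 1_{_T}$), the cosets $j$ with $L_{ij} = M$ together with $i$ itself form exactly $L_i(M)$. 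The key point I must check is that the map sending $s^T \in \mathrm{Supp}(\mathbb{C}e_\varphi^S)$ to the coset $j$ with $s \cap (X_i \times X_j) \neq \varnothing$ is a bijection onto $\{j \mid L_{ij} = M\} = L_i(M) \setminus \{i\}$ — surjectivity is clear from the definition of $L_{ij}$, and injectivity holds because distinct $s^T$ correspond to distinct cosets $sT$ out of $X_i$. Adjoining $i$ gives $|\mathrm{Supp}(\mathbb{C}e_\varphi^S)| = |L_i(M)|$, so $\chi(\sigma_{1_X}) = |L_i(M)| \in \{|l| \mid l \in \mathcal{L}\}$ (note $|L_i(M)| = \chi(\sigma_{1_X}) \geq 2$, so it is a genuine line).

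Finally I would prove the reverse inclusion: given a line $L_i(M) \in \mathcal{L}$ with $\{1_{_X}\} < M < T$ and $|L_i(M)| \geq 2$, pick $\varphi \in \mathrm{Irr}(T) \setminus \{1_{_T}\}$ with $\mathrm{Ker}\,\varphi = M$ (possible since $T \cong C_p \times C_p$ has a character of each prescribed proper kernel), run the computation above in reverse to get $|\mathrm{Supp}(\mathbb{C}e_\varphi^S)| = |L_i(M)| \geq 2 > 1$, so the support is not $\{1_{_X}^T\}$, whence Lemma \ref{lem:36} gives $\varphi^S \in \mathrm{Irr}(S)$ with $\varphi^S(\sigma_{1_X}) = |L_i(M)| \neq 1$. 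I expect the main obstacle to be the bookkeeping in the coset-counting step: one must be careful that $\mathrm{Supp}(\mathbb{C}e_\varphi^S) \subseteq S/\!/T$ is counted with the right multiplicity and correctly identified with cosets of $T$ in $X$ emanating from a fixed $X_i$, and that the definition of $L_i(M)$ (which is phrased in terms of an arbitrary $i$ but yields the same line from any of its points, by equation (\ref{B}) in the proof of Theorem \ref{thm:psp}) is matched up consistently with the choice of base coset implicit in reading off $\mathrm{Supp}$; once that identification is nailed down the rest is a direct application of the already-proved lemmas.
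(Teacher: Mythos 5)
Your proposal is correct and follows essentially the same route as the paper: non-linear irreducible characters are identified as induced characters $\varphi^S$ via Theorem \ref{thm:key}, their degrees are computed as $|\mathrm{Supp}(\mathbb{C}e_\varphi^S)| = |L_i(\mathrm{Ker}\,\varphi)|$ via Lemma \ref{lem:lem33}, and the converse uses Lemma \ref{lem:36}. The only cosmetic difference is that the paper first disposes of the commutative case (both sets empty) via Lemma \ref{lem:35}, which your argument handles implicitly.
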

\begin{proof}
Assume $(X, S)$ is commutative.
Then by Lemma~\ref{lem:35} $|TsT|=1$ for each $s \in S \setminus T$.
This implies that $(\mathcal{P}, \mathcal{L})$ is a partial linear space without lines.
Since every irreducible character of $S$ is linear, this case is done.

Assume $(X,S)$ is not commutative.
Then, by Lemma~\ref{lem:35}, there exists $s\in S\setminus T$ such that $|TsT|=p$.
Let $\chi \in \mathrm{Irr}(S)$ such that $\chi(\sigma_{1_X}) \neq 1$.
Since $\chi(\sigma_{1_X})>1$, we conclude from Theorem \ref{thm:key} that
$\chi=\psi^S$ for some $\psi\in \mathrm{Irr}(T)$.
Note that the equation obtained from (\ref{eq35}) by replacement of $\varphi$ by $\psi$ holds.
Since $\mathrm{dim}(\mathbb{C}e_\psi)\otimes \mathbb{C}(TsT)=1$, it follows from
Lemma \ref{lem:lem33} that
\[\psi^S(\sigma_{1_X}) =|\{s^T\in G\setminus \{1_{_X}\} \mid L(s)=\mathrm{Ker}\psi \}| + 1,\]
which is equal to the size of a line by definition of $L_i(L(s))$.

Conversely, let $L_i (M) \in \mathcal{L}$, where $M$ is a closed subset of $T$ with valency $p$.
Then we can take $\psi\in \mathrm{Irr}(T)$ such that $\mathrm{Ker} \psi=M$.
By Lemma~\ref{lem:36}, $\psi^S\in \mathrm{Irr}(S)$.
Therefore, we conclude from Lemma~\ref{lem:lem33} that $1<|L_i(M)|=\psi^S(\sigma_{1_X})$ as desired.
\end{proof}

\section{Proofs of the main theorems}\label{sec:main2}
For each $\iota \in \mathrm{Aut}(T)$, we define $\tilde{\iota} : S \rightarrow S$ by
$s \mapsto s ~\text{if}~ s \in S \setminus T ~~\text{and}~~ s \mapsto s^\iota ~\text{if}~ s \in T$.

\begin{flushleft}
\textbf{Proof of Theorem~\ref{thm:main1}}
\end{flushleft}
Clearly, $1_{_X}'= 1_{_X}$ and $(s')^\ast \in S'$ for each $s' \in S'$. It is enough to show that, for all $u'$, $v'$, $w' \in S'$, $|xu'\cap y(v')^\ast|$ is constant whenever $(x,y)\in w'$.

We divide our consideration into four cases depending on $|\{u, v, w \} \cap T|$.
\begin{enumerate}
\item $|\{u, v, w \} \cap T| = 3$. For each $(x,y) \in w'$, if $x, y \in X_i$, then
\[ |xu'\cap y(v')^\ast|=|xu^{\iota_{_i}} \cap y(v^{\iota_{_i}})^\ast|=c_{u^{\iota_{_i}}v^{\iota_{_i}}}^{w^{\iota_{_i}}}.\]
Since $\iota_{_i} \in \mathrm{Aut}(T)$, we have $c_{uv}^{w} = c_{u^{\iota_{_i}}v^{\iota_{_i}}}^{w^{\iota_{_i}}}$. This implies that $c_{u'v'}^{w'}$ is well-defined.
\item $|\{u, v, w \} \cap T| = 2$. First, we assume $w \in S \setminus T$.
Then $u, v \in T$ and $w'=w$.
For each $(x,y) \in w'$, if $(x, y) \in X_i \times X_j$ for some $i \neq j$, then
\[ |xu'\cap y(v')^\ast|=|xu^{\iota_{_i}} \cap y(v^{\iota_{_j}})^\ast| = c_{u^{\iota_{_i}}v^{\iota_{_j}}}^{w}. \]
Since $T$ is a closed subset of $S$, $c_{u^{\iota_{_i}}v^{\iota_{_j}}}^{w}=0$. Thus, $c_{u'v'}^{w'}=0$ and $c_{u'v'}^{w'}$ is well-defined.
Now we assume $w \in T$. Then $v \in S \setminus T$ or $u \in S \setminus T$.
Since $n_w c_{uv}^{w} = n_v c_{w^\ast u^\ast}^{v^\ast} = n_u c_{v w^\ast}^{u^\ast}$, we obtain the same conclusion as in the case when $w$ belongs to $S \setminus T$.

\item $|\{u, v, w \} \cap T| = 1$. First, we assume that $u \in T$ and $v, w \in S \setminus T$.
For each $(x, y) \in w'$, if $x \in X_i$, then
\[ |xu'\cap y(v')^\ast|=|xu^{\iota_{_i}} \cap yv^\ast|=c_{u^{\iota_{_i}}v}^{w}. \]
Since $v, w \in S \setminus T$ and $\tilde{\iota}_{_i} \in \mathrm{Aut}(S)$, we have $c_{u^{\iota_{_i}}v}^{w} = c_{u^{\tilde{\iota}_{_i}}v^{\tilde{\iota}_{_i}}} ^{w^{\tilde{\iota_{_i}}}}=c_{uv}^{w}$. Thus, $c_{uv}^{w} = c_{u'v'}^{w'}$ and $c_{u'v'}^{w'}$ is well-defined.
Similarly, we obtain the same conclusion as in the case when $v$ or $w$ belongs to $T$.
\item $|\{u, v, w \} \cap T| = 0$. It is clear that $c_{u'v'}^{w'}$ is well-defined and $c_{uv}^{w} = c_{u'v'}^{w'}$.
\end{enumerate}
This completes the proof.
\qed

\begin{flushleft}
\textbf{Proof of Theorem~\ref{thm:main2}}
\end{flushleft}
Let $\iota$ be an element of $\mathrm{Aut}(T)$ fixing each element of $H$. For convenience, we denote $s^{\tilde{\iota}}$ by $\tilde{s}$.
We will show that $c_{uv}^{w} = c_{\tilde{u}\tilde{v}}^{\tilde{w}}$ for all $u, v, w \in S$.

We divide our consideration into four cases depending on $|\{u, v, w \} \cap T|$.
\begin{enumerate}
\item $|\{u, v, w \} \cap T| = 3$. Since $\iota$ is an element of $\mathrm{Aut}(T)$, we have $c_{\tilde{u}\tilde{v}}^{\tilde{w}}=c_{u^{\iota}v^{\iota}}^{w^{\iota}}=c_{uv}^{w}$.
\item $|\{u, v, w \} \cap T| = 2$. Since $T$ is a closed subset of $S$, $c_{uv}^{w}=0=c_{\tilde{u}\tilde{v}}^{\tilde{w}}$. Thus we have $c_{\tilde{u}\tilde{v}}^{\tilde{w}}=c_{uv}^{w}$.
\item $|\{u, v, w \} \cap T| = 1$. First, we assume that $u \in T \setminus H$.
    If $wv^* \nsubseteq T$, then clearly $c_{uv}^{w} = 0 = c_{\tilde{u}\tilde{v}}^{\tilde{w}}$. Now assume $wv^* \subseteq T$.
Let $(x, y) \in \tilde{w} = w$. Note that $xu$ is a union of cosets of $H$ in $xT$ by the second condition, and
\[y\tilde{v}^\ast = yv^\ast = yv^\ast \cap yv^\ast T = yv^\ast \cap yv^\ast H = \bigcup_{i=1}^m (yv^\ast \cap x_i H),\]
where $\{ x_1, x_2, \dots, x_m \}$ is a transversal of $H$ in $yv^\ast T$ such that $(y, x_i) \in v^\ast$ for each $i$.

Since $|yv^\ast \cap x_i H| = c_{v^\ast H}^{v^\ast}$ does not depend on the choice of $i$, it follows that
\[ |xu \cap yv^\ast| = |\bigcup_{i=1}^m  (xu \cap (yv^\ast \cap x_i H))| = (\text{the number of cosets of} ~H~ \text{in} ~xu) \times c_{v^\ast {_H}}^{v^\ast} = \frac{n_u}{n_{_H}}c_{v^\ast {_H}}^{v^\ast}. \]
Since $n_u = n_{\tilde{u}}$, $c_{uv}^w$ is equal to $c_{\tilde{u}v}^{w}$.
Thus, since $\tilde{w}=w$ and $\tilde{v}=v$, we have $c_{uv}^{w} = c_{\tilde{u}\tilde{v}}^{\tilde{w}}$.
Next, we assume that $u \in H$. Then it is clear that $c_{uv}^{w} = c_{\tilde{u}\tilde{v}}^{\tilde{w}}$.
Since $n_w c_{uv}^{w} = n_v c_{w^\ast u^\ast}^{v^\ast} = n_u c_{v w^\ast}^{u^\ast}$, we obtain the same conclusion as in the case when $v$ or $w$ belongs to $T$.
\item $|\{u, v, w \} \cap T| = 0$. By the definition of $\tilde{\iota}$, clearly $c_{uv}^{w} = c_{\tilde{u}\tilde{v}}^{\tilde{w}}$.
\end{enumerate}
This completes the proof.
\qed

\begin{obs}\label{p3remark}
\emph{Suppose that $(X,S)$ is a $p$-scheme of order $p^3$ and $T:=O^\theta(S)$ is not a group.
Put $H:=\{ \ t \in T \mid n_t = 1 \}$. Then $T$ and $H$ satisfy the conditions in Theorem \ref{thm:main2}.}
\end{obs}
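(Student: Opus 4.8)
The plan is to verify the four hypotheses of Theorem~\ref{thm:main2} in turn: that $H$ and $T$ are closed subsets with $H\subseteq T$, that $T\lhd^\sharp S$, and that (i) and (ii) hold. Three of these are routine: $H\subseteq T$ by definition; $T=\Or(S)$ is closed and strongly normal, being the intersection of the strongly normal closed subsets of $S$; and $H=\Ot(T)$ is the thin radical of the subscheme $T$, hence a closed subset of $T$ and so of $S$. I work in the setting of this section, so $n_{_T}=p^2$; then a $T$-class carries a non-thin $p$-scheme of order $p^2$, and by the classification of $p$-schemes of order $p^2$ it is isomorphic to $C_p\wr C_p$. From this I read off: $n_{_H}=p$; the only closed subsets of $T$ are $\{1_{_X}\}$, $H$, $T$; the quotient $T\Qd H$ is the thin scheme of order $p$; and every $t\in T\setminus H$ has $n_t=p$.

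Condition (ii) then follows at once: for $t\in T\setminus H$ the image $t^H$ is a nonidentity, hence thin, element of $T\Qd H$, so $n_{_{HtH}}=n_{_H}\,n_{t^H}=p$; as $t\in HtH$ and $n_t=p$ this forces $HtH=\{t\}$, whence $tH=t$.

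For condition (i) I would first establish two facts. Since $H\subseteq T=\Or(S)$, the factor scheme $S\Qd H$ has order $p^2$ and its thin residue $\Or(S\Qd H)=T\Qd H$ is nontrivial, so $S\Qd H$ is non-thin; hence $S\Qd H\cong C_p\wr C_p$ with $T\Qd H$ its thin radical. Also $\Ot(S)=H$: it contains $H$, and it cannot have order $\geq p^2$, since a thin closed subset of order $p^2$ would be strongly normal of index $p$ and would therefore contain $\Or(S)=T$, which is absurd as $T$ is non-thin of order $p^2$. Consequently $S\Qd T\cong C_p$; so for $s\in S\setminus T$ we have $s^T$ of order $p$, $n_s\in\{p,p^2\}$, and $sT=TsT=HsH$ (the two equalities by comparison of valency-sums and by the fact that in $C_p\wr C_p$ the elements outside the thin radical biject with the nonidentity elements of the quotient $C_p$). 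Letting $C,C'$ be the source and target $T$-classes of $s$ and $\pi\colon X\to X/H$ the projection, $sH=sT$ amounts to: $xs$ meets all $p$ of the $H$-classes in $C'$, for every $x\in C$. If $n_s=p^2$ then $xs=C'$ and we are done, so the real content is the case $n_s=p$.

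Suppose $n_s=p$. Then $HsH$ has valency-sum $p^2$ with no thin member (as $\Ot(S)=H$), and since it contains $s$, of valency $p$, every element of $HsH$ has valency $p$ and $|HsH|=p$; hence it suffices to prove that the right stabiliser $R(s)=\{h\in H\mid sh=s\}$ is trivial, for then $h\mapsto sh$ is injective and $sH\subseteq HsH=sT$ have equal size. Assume instead $R(s)=H$ ($R(s)$ is a subgroup of $C_p$). Then every $xs$ is $H$-invariant, hence a single $H$-class, and the identity $\bar x\,s^H=\bigcup_{z\in xH}\pi(zs)=C'/H$ (valid because $S\Qd H\cong C_p\wr C_p$) shows that the $p$ classes $\{\,zs\mid z\in xH\,\}$ are pairwise distinct; so $z\mapsto zs$ is a bijection on each $H$-class of $C$ and is onto $C'/H$. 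Now $(\sigma_s\sigma_{s^*})_{zz'}=|zs\cap z's|$ is $p$ if $zs=z's$ and $0$ otherwise, so $\sigma_s\sigma_{s^*}=p\,\sigma_R$ with $R=\{(z,z')\mid zs=z's\}$ an equivalence relation; since $\sigma_s\sigma_{s^*}\in\mathbb{C}T$, $R$ is the union of the relations in a closed subset of $T$, hence of one of $\{1_{_X}\},H,T$ — and each is impossible ($1_{_X}$ would make $z\mapsto zs$ injective on the $p^2$-element set $C$; $H$ would make all $z$ in a common $H$-class have equal image, contradicting the distinctness above; $T$ would make $z\mapsto zs$ constant on $C$, contradicting surjectivity onto the $p>1$ classes of $C'/H$). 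This contradiction gives $R(s)=\{1_{_X}\}$ and finishes (i). The crux — the only non-bookkeeping step — is exactly this exclusion of $R(s)=H$: the clean route I see is to turn the hypothetical into the statement that $\sigma_s\sigma_{s^*}$ is a scalar multiple of an equivalence relation lying in $\mathbb{C}T$, and then to use that $T\cong C_p\wr C_p$ has only the three closed subsets $\{1_{_X}\},H,T$.
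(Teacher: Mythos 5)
The paper states this as a bare remark and gives no proof, so there is no "paper argument" to compare yours against; what you have written is, as far as I can check, a correct and complete verification. The routine parts are fine: $H\subseteq T$, closedness of $H$ as the thin radical of the subscheme on a $T$-class, strong normality of $T=\Or(S)$, the identification of that subscheme with $C_p\wr C_p$ (hence $n_H=p$, the closed subsets of $T$ being exactly $\{1_{_X}\},H,T$, and $n_t=p$ for $t\in T\setminus H$), and condition (ii) by the valency count $n_{_{HtH}}=p=n_t$. You also correctly isolate the one genuinely nontrivial point, namely condition (i) for $s\in S\setminus T$ of valency $p$: that $xs$ meets all $p$ of the $H$-classes of the target $T$-class, equivalently that the right stabilizer $R(s)=\{h\in H\mid sh=s\}$ is trivial. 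Your exclusion of $R(s)=H$ — turning the hypothesis into $\sigma_s\sigma_{s^*}=p\,\sigma_R$ with $R$ an equivalence relation that is a union of basis relations inside $T$, hence corresponding to one of $\{1_{_X}\},H,T$, and then killing the three cases by cardinality, by the injectivity of $z\mapsto zs$ on an $H$-class (which you correctly derive from $n_{s^H}=p$ in $S\Qd H\cong C_p\wr C_p$), and by surjectivity onto $C'/H$ — is sound.

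Two presentational points, neither a gap. First, you assert $\Or(S\Qd H)=T\Qd H$ without justification; all you actually need is that $S\Qd H$ is not thin, and that follows in one line: if $S\Qd H$ were thin, $H$ would be strongly normal in $S$ and would therefore contain $\Or(S)=T$, impossible since $n_{_H}=p<p^2=n_{_T}$. Once $S\Qd H$ is a non-thin $p$-scheme of order $p^2$ it is $C_p\wr C_p$, and $T\Qd H$, being closed of valency $p$, is its thin radical, which is all you use. Second, the remark as literally stated does not fix $n_{_T}=p^2$; you appeal to the standing setting of the paper, which is legitimate, but it is also forced by the hypotheses: a closed subset of valency $1$ or $p$ in a $p$-scheme carries a $p$-scheme of order at most $p$ and is therefore thin, so "$T$ is not a group" already rules out $n_{_T}\le p$.
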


The following is an example about Remark \ref{p3remark}.

\begin{example}\label{example}
Let $V:=\mathbb{F}_p^3$ and $G$ be a group generated by $\{ \tau_y, \iota \mid y \in V \}$, where
$\tau_y$ and $\iota$ are functions from $V$ to $V$ defined by $\tau_y(x)=x + y$ and $\iota((x_1, x_2, x_3))=((x_1 + x_2, x_2 + x_3, x_3))$, respectively. Then the orbitals of $G$ on $V$ are
\[ (0, 0)^G, (0, e_1)^G, (0, 2e_1)^G, \dots, (0, (p-1)e_1)^G,  \]
\[ (0, e_2)^G, (0, 2e_2)^G, \dots, (0, (p-1)e_2)^G, \]
\[ (0, e_3)^G, (0, e_3 + e_1)^G, \dots, (0, e_3 + (p-1)e_1)^G,  \]
\[ (0, 2e_3)^G, (0, 2e_3 + e_1)^G, \dots, (0, 2e_3 + (p-1)e_1)^G,  \]
\[.............................................................................,\]
\[ (0, (p-1)e_3)^G, (0, (p-1)e_3 + e_1)^G, \dots, (0, (p-1)e_3 + (p-1)e_1)^G,  \]
where $\{ e_1, e_2, e_3 \}$ is the standard basis of $V$.

Let $(X, S)$ be the Schurian scheme $(V,\mathcal{R}_G)$ induced by $G$ on $V \times V$.
Put
\[H:=\{ (0, 0)^G, (0, e_1)^G, (0, 2e_1)^G, \dots, (0, (p-1)e_1)^G \}\]
and
\[T:=H\cup \{ (0, e_2)^G, (0, 2e_2)^G, \dots, (0, (p-1)e_2)^G \}.\]
Note that $G \cong V \rtimes\langle \iota \rangle$, $H \cong C_p$ and $T \cong C_p \wr C_p$ as a wreath product of schemes.
Then one can check that $T$ and $H$ satisfy the conditions in Theorem \ref{thm:main2}.
Further, we can observe the following:
For $s \in S \setminus T$ and $t \in T \setminus H$,
\begin{enumerate}
\item there exists a unique $s_1 \in S \setminus T$ such that $c_{ss}^{s_1} = 1$;
\item $ss^\ast = s^\ast s = \{1_{_X}\} \cup (T \setminus H)$;
\item $|tt|=1$.
\end{enumerate}
\end{example}
Note that (ii) implies that $c_{s^\ast s}^t = 1$ for each $t \in T \setminus H$, and hence $c_{t s^\ast}^{s^\ast} = 1$ since $n_s = n_t = p$.
We will denote by $t'$ the element in $tt$ which is unique by (iii).

\begin{thm}\label{thm:nonschurian}
For each odd prime $p$, there exists a non-Schurian scheme with the same intersection numbers as the Schurian scheme $(X,S)$ given in Example \ref{example}.
\end{thm}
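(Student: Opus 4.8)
The plan is to apply Theorem~\ref{thm:main1} to the Schurian scheme $(X,S)$ of Example~\ref{example}, using the closed subsets $T$ and $H$ there, and then to certify that a suitable choice of automorphisms produces something non-Schurian. By Remark~\ref{p3remark} the pair $(T,H)$ satisfies the hypotheses of Theorem~\ref{thm:main2}; in particular, for every $\iota\in\autg(T)$ fixing $H$ pointwise we get $\tilde\iota\in\autg(S)$. Here $T\cong C_p\wr C_p$, and the automorphisms of $T$ fixing the thin radical $H\cong C_p$ elementwise are plentiful (they act as scheme-automorphisms on the "top" $C_p$-layer $T\setminus H$, i.e. as a subgroup of $\autg(C_p)\cong C_{p-1}$, together with the fusions forced inside each $H$-coset — here the relevant group of such $\iota$ has order a multiple of $p-1>1$ since $p$ is odd). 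Thus I pick $\iota\in\autg(T)$ fixing $H$ with $\iota\neq\mathrm{id}$, set $\iota_1=\cdots=\iota_{m}$ all equal to this $\iota$ on some but not all of the cosets $X_1,\dots,X_m$ of $T$ in $X$, and let $(X,S')$ be the scheme produced by Theorem~\ref{thm:main1}. By that theorem $(X,S')$ is algebraically isomorphic to $(X,S)$, so it has the same intersection numbers.

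The core of the argument is then to show $(X,S')$ is not Schurian. The strategy is to compute $\autg(X,S')$ and show it is too small to act transitively — equivalently, to show that no transitive group can have $S'$ as its orbital scheme. Concretely, I would argue as follows. In the original Schurian scheme, the structure recorded by properties (i)--(iii) of Example~\ref{example} — the unique $s_1$ with $c_{ss}^{s_1}=1$, the relation $ss^\ast=s^\ast s=\{1_X\}\cup(T\setminus H)$, and $|tt|=1$ — together with the $C_p\wr C_p$ structure of $T$ and the way $\iota$ twists the $H$-cosets, forces any color-preserving automorphism of $(X,S')$ to respect the coset partition $X/T$ and to induce on each coset $X_j$ a permutation compatible with $\iota_j$; since the $\iota_j$ are not all equal, an automorphism fixing one coset $X_j$ setwise cannot be moved to a coset $X_k$ with $\iota_k\neq\iota_j$ by any element of $\autg(X,S')$, so the coset $X/T$ fails to be a single orbit under $\autg(X,S')$. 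A transitive group $G$ with $\mathcal R_G=S'$ would have $\autg(X,S')\supseteq G$ transitive — contradiction. (Alternatively, and perhaps more cleanly: a Schurian scheme of order $p^3$ whose thin residue is $T$ is, by the results quoted in the introduction together with the classification behind the three pairs for $p=3$, determined up to isomorphism by its thin residue and the "gluing data"; $(X,S')$ realizes gluing data not realizable inside any $p$-group of order $p^3$, because a genuine group automorphism would have to act uniformly across cosets.)

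The main obstacle is this last non-Schurianity certification: Theorem~\ref{thm:main1} only guarantees the \emph{algebraic} isomorphism $S\cong S'$, and one must genuinely rule out a \emph{combinatorial} isomorphism $S'\cong\mathcal R_G$ for every transitive $G$. I expect to handle it by a careful analysis of $\autg(X,S')$ using the three structural identities (i)--(iii) of Example~\ref{example} — which transfer to $S'$ because they are statements about intersection numbers (note the remark after Example~\ref{example} that $c^{t}_{s^\ast s}=1=c^{s^\ast}_{ts^\ast}$, intersection-number data) — to pin down how automorphisms act on each $X_j$ and on $T$, then observing that the map $t\mapsto t'$ of (iii) combined with the twist $\iota_j$ gives an obstruction that is non-uniform across cosets whenever the $\iota_j$ are not all equal. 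Once $\autg(X,S')$ is shown not to be transitive on $X$, non-Schurianity is immediate. Choosing $\iota\neq\mathrm{id}$ is exactly where $p$ odd is used, since $\autg(C_p)$ is trivial when $p=2$ but has order $p-1\geq2$ when $p$ is odd.
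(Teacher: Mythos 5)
Your construction is the same as the paper's: use Remark~\ref{p3remark} and Theorem~\ref{thm:main2} to extend an automorphism $\iota\in\mathrm{Aut}(T)$ fixing $H$ pointwise to $\mathrm{Aut}(S)$, then apply Theorem~\ref{thm:main1} with $\iota_j=\iota$ on some but not all cosets to get $(X,S')$ algebraically isomorphic to $(X,S)$. (The paper is more specific: it chooses $\iota$ with $(t_1')^{\iota}=t_1$ and twists exactly one coset, the one containing a distinguished point $z_1$.) That part is fine. But the entire content of the theorem is the non-Schurianity of $(X,S')$, and there your proposal has a genuine gap: you explicitly defer it, and the sketch you offer is not a viable route. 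You propose to show $\mathrm{Aut}(X,S')$ is intransitive on $X$ because an automorphism ``must induce on each coset a permutation compatible with $\iota_j$'' and hence cannot carry a twisted coset to an untwisted one. This per-coset obstruction does not exist: the induced subscheme on every coset is isomorphic to the wreath product $C_p\wr C_p$, and $\iota$ (being an algebraic automorphism of $T$ fixing $H$) is realized by a combinatorial isomorphism of that subscheme, so each twisted coset is isomorphic \emph{as a colored graph} to each untwisted one. The obstruction must come from how the cosets are glued by the relations in $S\setminus T$, and point-transitivity of $\mathrm{Aut}(X,S')$ may well survive the twist; non-Schurianity only requires intransitivity on the arcs of some relation.

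The missing idea is the explicit rigid configuration that the paper builds from the intersection-number identities (i)--(iii) of Example~\ref{example}. Since $c_{t_1s_1^\ast}^{s_1^\ast}=1$ and $c_{s_1s_1}^{s_2}=1$ are equal to $1$ (not merely nonzero), a single arc $(x,y_1)\in s_1$ \emph{canonically determines} points $y_2\in xs_1\cap y_1t_1$ and $z_i\in xs_2\cap y_is_1$; hence any transitive $G$ with $\mathcal{R}_G=S'$, acting transitively on the arcs of $s_1$, would force the relation containing $(z_1,z_2)$ to be the same for every choice of $(x,y_1)\in s_1$. Twisting only the coset containing $z_1$ by an $\iota$ moving $t_1'$ makes this relation depend on the choice of $(x,y_1)$, which is the contradiction. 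Without an argument of this uniqueness-propagation type (or an actual computation of $\mathrm{Aut}(X,S')$), the claim that $(X,S')$ is non-Schurian is unsupported; as you yourself note, this is the main obstacle, and the proposal does not overcome it. Your observation about why $p$ must be odd (for $p=2$ one has $T\setminus H=\{t_1\}$, so no nontrivial $\iota$ fixing $H$ exists) is correct and consistent with the paper.
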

\begin{proof}
Fix $s_1 \in S \setminus T$ and $t_1 \in T \setminus H$.
Let $x \in X$ and $y_1 \in xs_1$. Then there exists a unique $y_2 \in y_1 t_1  \cap  xs_1$ since $c_{t_1 s_1^\ast}^{s_1^\ast} = 1$.
Inductively, we can define $y_{i+1} \in y_i t_1 \cap   xs_1$ for $i = 1, \dots, p-1$.
Note that $(y_p, y_1) \in t_1$.
For each $y_i \in xs_1$, there exists a unique $z_i \in xs_2 \cap y_i s_1$, where $s_2$ is a unique element of $S \setminus T$ such that $c_{s_1 s_1}^{s_2} = 1$.
Note that $\{ z_i \mid 1 \leq i \leq p \}$ are distinct and $(z_i, z_{i+1}) \in t_1'$.

On the other hand, it is possible to take $\iota \in \mathrm{Aut}(T)$ such that $(t_1')^{\iota} = t_1$ and $\iota|_{H}$ is the identity map.
By Theorem \ref{thm:main2}, $\iota$ is extended to an element of $\mathrm{Aut}(S)$.
Let $(X, S')$ be the scheme obtained by applying Theorem \ref{thm:main1} for the case where $\iota_{_j} = \iota$ for the coset $X_j$ of $T$ containing $z_1$ and $\iota_{_i}$ is the identity except for $j$.

Next, we claim that $(X, S')$ is not isomorphic to $(X, S)$.
Suppose to the contrary that $\phi : X\cup S' \rightarrow X\cup S$ is an isomorphism such that for each $x, y \in X$,
\[ (x, y) \in s \Longleftrightarrow (x^\phi, y^\phi) \in s^\phi . \]
Then the adjacency among elements in $\{ x, y_i, z_i \mid 1\leq i \leq p \}$ must be invariant under $\phi$.
Note that both $(y_1, y_2)$ and $(z_1, z_2)$ lie in $t_1'$.
Therefore, both $(y_1^\phi, y_2^\phi)$ and $(z_1^\phi, z_2^\phi)$ lie in $t_1'^\phi$. However, such a structure does not occur in $(X, S)$ by the construction of $(X,S')$, a contradiction.

Finally, we claim that $(X, S')$ is not Schurian.
Let $x \in X$, $s_1 \in S \setminus T$ and $y_1 \in xs_1$. Then
a pair $(x, y_1)$ induces unique $y_2 \in xs_1 \cap y_1 t_1$ and $z_i \in xs_2 \cap y_i s_1$ for $i=1, 2$.
If $(X, S')$ is Schurian, then the relation containing $(z_1, z_2)$ is uniquely determined. On the other hand, by the construction of $(X, S')$, we can take $(x, y_1), (\hat{x}, \hat{y}_1) \in s$ such that the induced pairs $(z_1, z_2)$ and $(\hat{z}_1, \hat{z}_2)$ by $(x, y_1)$ and $(\hat{x}, \hat{y}_1)$ respectively, are not in the same relation. But, this is a contradiction.
\end{proof}

\section*{References}
\bibstyle{plain}

\end{document}